\def\beq{\begin{equation}}
\def\eeq{\end{equation}}
\def\ba{\begin{array}}
\def\ea{\end{array}}
\def\cal{\mathcal}
\numberwithin{equation}{section}
\newenvironment{abs}{\textbf{Abstract}\mbox{  }}{ }
\newenvironment{key words}{\textbf{Keywords}\mbox{  }}{ }
\newtheorem{theorem}{Theorem}[section]
\newtheorem{corollary}[theorem]{\textbf{Corollary}}
\newtheorem{proposition}[theorem]{\textbf{Proposition}}
\newtheorem{lemma}[theorem]{Lemma}
\renewenvironment{proof}{\noindent{\textbf{Proof.}}}{\hfill$\Box$}
\theoremstyle{remark}
\theoremstyle{plain}
\begin{document}
\title{\textbf{Liouville theorems on the upper half space}}
\author  {Lei Wang  and Meijun Zhu}
\address{ Lei Wang, Academy of Mathematics and Systems Sciences, Chinese Academy of Sciences, Beijing 100190, P.R. China, and Department of Mathematics, The University of Oklahoma, Norman, OK 73019, USA}

\email{wanglei@amss.ac.cn}

\address{ Meijun Zhu, Department of Mathematics,
The University of Oklahoma, Norman, OK 73019, USA}

\email{mzhu@math.ou.edu}

\maketitle

\noindent
\begin{abs}
In this paper we shall establish some Liouville theorems for solutions bounded from below to certain linear elliptic equations on the upper half space. In particular, we show that  for $a \in (0, 1)$  constants are the only $C^1$ up to the boundary positive solutions to $div(x_n^a \nabla u)=0$ on the upper half space.
\end{abs}\\

\section{\textbf{Introduction}\label{Section 1}}









In this paper we shall establish some Liouville theorems for solutions bounded from below to certain linear elliptic equations on the upper half space. These results imply  the uniqueness property to  various extension operators on the upper half space. They also provide us a new view point on how to obtain  positive kernels for the extension operators. The elliptic properties and estimates, as well as the geometric applications of these extension operators were  widely studied recently, see, for example, Caffarelli and Silvestre \cite{CS07}, Hang, Wang and Yang \cite{HWY08},  Chen \cite{Chen14}, Dou and Zhu \cite{DZ15},  Dou, Guo and Zhu \cite{DGZ17}, Gluck \cite{G18}, and references therein.

\subsection{Main results} Denote $\mathbb{R}^n_+=\{x=(x', x_n) \in \mathbb{R}^n \ : \  x_n>0\}$ as the upper half space. 
%
%
We  shall prove

\begin{theorem}\label{main-1}
	For $n\geq2$ and $a\in\mathbb{R}$, let  $u(x) \in C^2(\mathbb{R}^n_{+})\cap C^0(\overline {\mathbb{R}^n_{+}})$ be a solution to
	\begin{equation}\label{equ}
	\begin{cases}
	div(x_{n}^{a} \nabla u)=0, & \quad u>-C  \quad \text{in}\quad \mathbb{R}^n_+,\\
	u=0, &\quad\text{on}\quad \partial\mathbb{R}^n_+.
	\end{cases}
	\end{equation}
Then $u=C_*x_n^{1-a}$  for some nonnegative constant $C_*$;
         
	\end{theorem}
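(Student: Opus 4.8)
The plan is to reduce the problem to a one-dimensional ODE by showing that any admissible $u$ must be independent of the tangential variable $x'$, the lower bound being exactly what forces this rigidity. First I would pass to $w:=u+C>0$, which is again a solution (constants lie in the kernel of $\operatorname{div}(x_n^a\nabla\,\cdot\,)$) with boundary value $C$ on $\partial\mathbb{R}^n_+$. It is worth recording at the outset the two features that drive everything: the operator is invariant under translations and rotations in $x'$ and under the dilations $u(x)\mapsto u(\lambda x)$, and it admits the explicit solutions $p(x')\,x_n^{1-a}$ for every $x'$-harmonic polynomial $p$ (a direct computation gives $\operatorname{div}(x_n^a\nabla(p\,x_n^{1-a}))=x_n\,\Delta_{x'}p$). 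All of the latter with $p$ nonconstant are unbounded below, which is the structural reason the hypothesis $u>-C$ cannot be dropped and the signpost for how it must be used. Finally, since the ODE $(x_n^a u')'=0$ integrates to $u=c_1+c_2x_n^{1-a}$, the cases $a\ge 1$ collapse to $u\equiv0$ once continuity up to the boundary and $u|_{x_n=0}=0$ are imposed; so the substantive range is $-1<a<1$, where $x_n^a$ is a Muckenhoupt $A_2$ weight.

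In that range I would next extract an a priori growth bound. By the Fabes--Kenig--Serapioni theory the positive solution $w$ obeys the scale-invariant interior Harnack inequality $\sup_{B_r(x_0)}w\le C_H\inf_{B_r(x_0)}w$ on every ball with $B_{2r}(x_0)\subset\mathbb{R}^n_+$. Chaining such balls along rays, with radii comparable to $x_n$, yields at most polynomial growth, $w(x)\le C(1+|x|)^N$. Differentiating the equation in a tangential direction (the coefficient $x_n^a$ is $x'$-independent) shows $\partial_{x_1}u$ solves the same equation with zero Dirichlet data, and the interior gradient estimate gives the companion bound $|\partial_{x'}u(x',x_n)|\le C\,x_n^{-1}w(x',x_n)$, so tangential derivatives also grow at most polynomially.

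The heart of the argument is the rigidity step. Morally one separates variables in $x'$: writing a solution as a superposition of modes $e^{i\xi\cdot x'}\phi_\xi(x_n)$, the profile solves the Bessel-type ODE $(x_n^a\phi_\xi')'=|\xi|^2x_n^a\phi_\xi$ with $\phi_\xi(0)=0$, and for $\xi\neq0$ the Dirichlet solution is the dominant one, growing like $e^{|\xi|x_n}$. The polynomial bound from the previous step therefore annihilates all frequencies $\xi\neq0$, forcing $u(\,\cdot\,,x_n)$ to be a polynomial in $x'$ of bounded degree; hence $u$ lies in the finite-dimensional space of polynomially growing Dirichlet solutions, which is spanned by the fields $p(x')x_n^{1-a}$ (plus the higher $x_n^{3-a},\dots$ corrections dictated by $\operatorname{div}(x_n^a\nabla\,\cdot\,)=0$). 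Imposing $u>-C$ now eliminates every term with nontrivial $x'$-dependence: a nonconstant $x'$-harmonic $p$ is unbounded below, and each correction of higher $x_n$-degree carries a sign that drives $u\to-\infty$ along either $|x'|\to\infty$ or $x_n\to\infty$. Only the constant mode survives, so $u=u(x_n)$, the ODE gives $u=C_*x_n^{1-a}$, and the lower bound together with $\lim_{x_n\to\infty}x_n^{1-a}=+\infty$ forces $C_*\ge0$.

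The main obstacle is making the separation-of-variables step rigorous, since $u$ is only bounded below and need not decay (or even be integrable) in $x'$, so a literal Fourier transform is unavailable. I would instead exclude the exponentially growing modes by a Phragm\'en--Lindel\"of comparison against the explicit recessive barriers $e^{\pm\mu\cdot x'}\phi_\mu(x_n)$, where $\phi_\mu$ is the decaying solution of the profile ODE, and then treat the polynomially bounded remainder by a Liouville/Bernstein argument before invoking $u>-C$ to kill the nonconstant harmonic coefficients. Propagating these comparisons uniformly up to the degenerate boundary---and separately disposing of the ranges $a\ge1$ and $a\le-1$, where the $A_2$ structure and hence the Harnack input are lost and a direct maximum-principle/barrier argument must replace the growth step---is the delicate technical point on which the proof ultimately turns.
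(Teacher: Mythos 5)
There is a genuine gap, and in fact two. First, the theorem is stated for \emph{every} $a\in\mathbb{R}$, but your entire quantitative machinery (the Fabes--Kenig--Serapioni Harnack inequality, the $A_2$ structure, the growth bound) exists only for $-1<a<1$. Your dismissal of $a\ge 1$ at the outset is circular: you say these cases ``collapse'' because the ODE $(x_n^au')'=0$ forces $u\equiv 0$, but the reduction of the PDE to that ODE --- i.e.\ the statement $u=u(x_n)$ --- is precisely the rigidity one must prove, and for $a\ge1$ or $a\le-1$ you offer only the phrase ``a direct maximum-principle/barrier argument must replace the growth step.'' That unexecuted step is most of the theorem's range. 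Second, even in the range $-1<a<1$ your central step is, by your own admission, only ``moral'': a Fourier decomposition in $x'$ is not available for a function that is merely bounded below, and the proposed Phragm\'en--Lindel\"of comparison against recessive barriers, as well as the claim that each ``higher $x_n$-degree correction carries a sign'' forcing $u\to-\infty$, are named but not carried out. A further technical hole: interior Harnack chains give bounds of the form $w(x)\le C(|x|/x_n)^N$, which degenerate as $x_n\to 0$; to get the purely polynomial bound $w\le C(1+|x|)^N$ that your frequency-killing argument needs, you would have to invoke a Carleson/boundary-Harnack estimate up to the degenerate boundary, which you do not address.

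For comparison, the paper's proof bypasses all of this and is uniform in $a$. It uses the method of moving spheres: the Kelvin-type invariance $u_{x,\lambda}(y)=\frac{\lambda^{n-2+a}}{|y-x|^{n-2+a}}\,u\bigl(x+\frac{\lambda^2(y-x)}{|y-x|^2}\bigr)$ (Lemma \ref{invariant}) shows $u_{x,\lambda}$ solves the same equation for every $x\in\partial\mathbb{R}^n_+$ and $\lambda>0$; after normalizing the boundary value to a nonzero constant, an elementary maximum-principle comparison on annuli $B_N^+(x)\setminus\overline{B^+_\lambda(x)}$ --- using only the lower bound on $u$ and the sign of $n-2+a$ to control $w_{x,\lambda}=u-u_{x,\lambda}$ at infinity and on the boundary --- yields $u_{x,\lambda}\le u$ outside $B_\lambda(x)$ (Lemmas \ref{lem1}, \ref{lem1-1}, \ref{lem1-2}, with a logarithmic transform in the special case $a=2-n$); then the calculus lemma of Dou--Zhu/Li--Zhu (Lemmas \ref{lem5}, \ref{m}) converts this family of inequalities directly into $u=u(y_n)$, and the ODE finishes. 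No Harnack inequality, growth estimate, or Fourier analysis is needed, which is exactly why the result holds for all $a\in\mathbb{R}$. If you want to salvage your approach, it could plausibly be completed as a self-contained proof for $-1<a<1$ (where it would give interesting extra information, namely the a priori growth bound), but as a proof of the stated theorem it does not close.
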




\smallskip

For Neumann boundary condition, we have

\begin{theorem}\label{main-2}
	Assume $n\geq2$ and $\max\{-1,2-n\}< a<1$. Suppose $u(x) \in C^2(\mathbb{R}^n_{+})\cap C^1(\overline {\mathbb{R}^n_{+}})$ satisfies 
	\begin{equation}\label{equ-1}
	\begin{cases}
	div(x_{n}^{a} \nabla u)=0, &\quad u>0, \quad \text{in}\quad \mathbb{R}^n_+,\\
	x_n^a\frac {\partial u}{\partial x_n}=0&\quad\text{on}\quad \partial\mathbb{R}^n_+.
	\end{cases}
	\end{equation}
	Then $u=C$ for some positive constant $C$.
\end{theorem}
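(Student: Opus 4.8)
The plan is to remove the boundary by even reflection across the hyperplane $\{x_n=0\}$, thereby turning the Neumann problem into a global equation for a positive solution, and then to invoke the Harnack and oscillation estimates of Fabes, Kenig and Serapioni for degenerate elliptic equations with Muckenhoupt weights.

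Set $w(x)=|x_n|^a$ and define the even reflection $\tilde u(x',x_n)=u(x',|x_n|)$ on all of $\mathbb{R}^n$. Since the hypothesis forces $a\in(-1,1)$, the weight $w$ lies in the Muckenhoupt class $A_2$, with a finite constant $[w]_{A_2}$ that is invariant under dilations. The key claim is that $\tilde u$ is an everywhere positive weak solution of $\operatorname{div}(w\,\nabla\tilde u)=0$ on $\mathbb{R}^n$. To see this, fix $\phi\in C_c^\infty(\mathbb{R}^n)$ and integrate by parts over the slabs $\{x_n>\varepsilon\}$ and $\{x_n<-\varepsilon\}$. The interior terms vanish because $u$ solves the equation in the open half space, and the two collar integrals along $\{|x_n|=\varepsilon\}$ are controlled by $\varepsilon^{a}\,\partial_{x_n}u(x',\varepsilon)$, which tends to $0$ as $\varepsilon\to0$ by the Neumann condition $x_n^a\partial_{x_n}u=0$ together with the $C^1$ regularity up to the boundary. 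Local integrability of $w$ (valid as $a>-1$) ensures $\int|x_n|^a|\nabla\tilde u|\,|\nabla\phi|<\infty$ on the support of $\phi$, so that $\int_{\mathbb{R}^n}w\,\nabla\tilde u\cdot\nabla\phi\,dx=0$ and $\tilde u\in H^1_{\mathrm{loc}}(w)$ is the desired global weak solution.

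Next, since the coefficient matrix equals $w\,I$ with $w\in A_2$, the Fabes--Kenig--Serapioni Harnack inequality yields a constant $C=C(n,a)$, independent of the radius, with $\sup_{B_R(p)}\tilde u\le C\inf_{B_R(p)}\tilde u$ for every ball. Fixing an interior point $p$ (so that $\tilde u(p)=u(p)>0$) gives $\sup_{B_R(p)}\tilde u\le C\,\tilde u(p)$ for all $R$; letting $R\to\infty$ shows that $\tilde u$ is globally bounded, $0<\tilde u\le M:=C\,\tilde u(p)$. Finally, the weighted De Giorgi--Nash--Moser estimate provides $\alpha\in(0,1)$ with
\[
\operatorname{osc}_{B_\rho(p)}\tilde u\le C\Big(\frac{\rho}{R}\Big)^{\alpha}\sup_{B_{2R}(p)}\tilde u\le C\Big(\frac{\rho}{R}\Big)^{\alpha}M
\]
for $0<\rho\le R$. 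Fixing $\rho$ and sending $R\to\infty$ forces $\operatorname{osc}_{B_\rho(p)}\tilde u=0$ for every $\rho$, so $\tilde u$ is constant; restricting to $\mathbb{R}^n_+$ gives $u\equiv C>0$.

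The step I expect to be the main obstacle is the reflection itself: one must confirm that the even extension lies in the weighted energy space and that the Neumann condition leaves no distributional source on $\{x_n=0\}$, which is where the degeneracy of $w$ and the precise meaning of $x_n^a\partial_{x_n}u=0$ must be handled with care. I note that this route uses only $a\in(-1,1)$; the sharper lower bound $a>2-n$, which is active only when $n=2$, does not seem to be needed here and is presumably dictated by a different, more quantitative argument---for example one based on the fundamental solution of the weighted operator, whose integrability suffices for a representation formula precisely when the effective dimension $n+a$ exceeds $2$.
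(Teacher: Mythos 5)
Your route is genuinely different from the paper's. The paper proves Theorem \ref{main-2} by the method of moving spheres: Lemma \ref{1.2-1} establishes $u_{x,\lambda}\le u$ outside $B_\lambda(x)$ for every boundary center $x$ and every $\lambda>0$, using the maximum principle with the auxiliary barrier $g(y)=y_n^{1-a}$ to handle the degenerate Neumann condition (this is where both $a>-1$ and $a<1$ enter), together with Proposition \ref{lem4-1} on the invariance of the Neumann condition; then Lemma \ref{lem5} forces $u$ to depend only on $y_n$, and an ODE finishes the proof. Your reflection-plus-Fabes--Kenig--Serapioni argument replaces all of this with weighted De Giorgi--Nash--Moser theory. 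It is a sound and standard strategy: the scale-invariant Harnack inequality for $A_2$ weights does give global boundedness, and the oscillation decay then gives constancy. It also needs no Kelvin-type transform, so the restriction $a>2-n$ (which is active only for $n=2$, where the paper requires $a>0$) disappears, and you would in fact obtain the statement for all $-1<a<1$, $n\ge2$; the paper itself concedes the reflection idea for $a=0$. What the paper's method buys instead is self-containedness (nothing beyond the maximum principle) and a template that also covers the Dirichlet case, Theorem \ref{main-1}, for exponents $a$ where the weight is not in $A_2$ and reflection is unavailable.

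There is, however, one genuine gap, precisely at the step you flagged, and it is confined to $-1<a<0$ (relevant for $n\ge3$, where the theorem allows negative $a$). You discard the collar integrals $\int_{\{x_n=\varepsilon\}}\varepsilon^a\,\partial_n u(x',\varepsilon)\,\phi\,dx'$ because the integrand tends to zero pointwise in $x'$ by \eqref{bdry-1}. For $a\ge0$ this is legitimate: $\partial_n u$ is locally bounded by the $C^1(\overline{\mathbb{R}^n_+})$ hypothesis, so $\varepsilon^a\partial_n u$ is locally uniformly bounded and dominated convergence applies. For $a<0$ the factor $\varepsilon^a$ blows up, and the boundary condition \eqref{bdry-1} is only a pointwise limit for each fixed $x'$; pointwise convergence does not control the boundary integrals, because no locally uniform bound on $x_n^a\partial_n u$ near $\{x_n=0\}$ follows from the hypotheses (a moving-bump profile in $\partial_n u$ is compatible with $C^1$ regularity and with \eqref{bdry-1}). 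To close this you must either strengthen the interpretation of the Neumann condition (locally uniform convergence, or the variational formulation in which $\tilde u$ is by definition a global weak solution), or prove a local estimate of the form $|x_n^a\partial_n u|\le C$ near the boundary. The paper's proof does not face this issue, since it invokes the boundary condition only pointwise, at a single minimum point.
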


The boundary condition in  \eqref{equ-1} holds  in the following sense:
\begin{equation}\label{bdry-1}
\lim_{x_n \to 0^+} x_n^a\frac {\partial u}{\partial x_n}=0.
\end{equation}

Note that  for $a>0$,  if  $u(x) \in C^1(\overline {\mathbb{R}^n_{+}})$, it automatically satisfies  \eqref{bdry-1}. We immediately have the following result.
\begin{corollary}\label{str-1}
Assume $n\ge2$ and $0<a<1$. Suppose $u(x) \in C^2(\mathbb{R}^n_{+})\cap C^1(\overline {\mathbb{R}^n_{+}})$  satisfies 
	\begin{equation}\label{equ-1-11}
	div(x_{n}^{a} \nabla u)=0, \quad u>0, \quad \text{in}\quad \mathbb{R}^n_+.
	\end{equation}
Then $u=C$ for some positive constant $C$.
	\end{corollary}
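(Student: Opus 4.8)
The plan is to obtain this statement as an immediate consequence of Theorem \ref{main-2}, since the only difference between the two is that Theorem \ref{main-2} imposes the explicit Neumann condition \eqref{bdry-1} while the corollary merely assumes $u \in C^1(\overline{\mathbb{R}^n_+})$. Accordingly, the entire task reduces to checking that the two standing hypotheses of Theorem \ref{main-2} are met under the assumptions of the corollary, namely the admissible range of $a$ and the weighted Neumann condition.

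First I would verify the range condition. Since $n \geq 2$ we have $2-n \leq 0$, and trivially $-1 < 0$, so $\max\{-1, 2-n\} \leq 0 < a$; combined with the hypothesis $a < 1$, this gives $\max\{-1, 2-n\} < a < 1$, which is precisely the constraint required in Theorem \ref{main-2}. Next I would confirm the boundary condition \eqref{bdry-1}. Because $u \in C^1(\overline{\mathbb{R}^n_+})$, the derivative $\partial u / \partial x_n$ extends continuously to $\partial \mathbb{R}^n_+$ and is therefore locally bounded near the boundary. Since $a > 0$, we have $x_n^a \to 0$ as $x_n \to 0^+$, and multiplying the bounded factor $\partial u / \partial x_n$ by $x_n^a$ yields $\lim_{x_n \to 0^+} x_n^a \, \partial u / \partial x_n = 0$. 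This is exactly \eqref{bdry-1}, so the Neumann condition of Theorem \ref{main-2} holds automatically.

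With both hypotheses verified, Theorem \ref{main-2} applies directly and forces $u \equiv C$ for some positive constant $C$, completing the argument. I do not expect any genuine obstacle here: all of the analytic substance is already contained in Theorem \ref{main-2}, and the corollary is purely a matter of recognizing that $C^1$ regularity up to the boundary, together with the positivity of the weight exponent $a$, makes the weighted normal derivative vanish on $\partial \mathbb{R}^n_+$. The only point worth stating carefully is that it is the strict positivity $a > 0$ (not merely $a > \max\{-1, 2-n\}$) that drives $x_n^a \to 0$, which is why the corollary is restricted to $0 < a < 1$ rather than the full range allowed in Theorem \ref{main-2}.
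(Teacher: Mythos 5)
Your proposal is correct and is exactly the paper's own argument: the authors note that for $a>0$ the $C^1(\overline{\mathbb{R}^n_+})$ regularity makes $\lim_{x_n\to 0^+}x_n^a\,\partial u/\partial x_n=0$ hold automatically, and then the corollary follows immediately from Theorem \ref{main-2} since $\max\{-1,2-n\}\le 0<a<1$ when $n\ge 2$. No gaps.
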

 Corollary \ref{str-1} is quite striking: there is {\it no assumption} on the boundary value of $u(x)$. It is worth pointing out that the result in Corollary \ref{str-1} does not hold for $a=0$. And the condition $u(x) \in C^1(\overline {\mathbb{R}^n_{+}})$  can not be weakened since $u(x)=x_n^{1-a}$ does satisfy equation \eqref{equ-1-11} and is positive on the upper half space.
 
 Combining Corollary \ref{str-1} with the classical Liouville Theorem for positive harmonic functions in the whole space, we have the following generalized Liouville Theorem.
 
 \begin{corollary}\label{str-2}
Assume $n\ge2$ and $0\le a<1$. Any positive $C^2(\mathbb{R}^n) $ solution to  
	\begin{equation}\label{equ-1-12}
	div(|x_{n}|^{a} \nabla u)=0, \quad \text{in}\quad \mathbb{R}^n
	\end{equation}
must be a constant function.
	\end{corollary}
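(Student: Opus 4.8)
The plan is to split the argument according to whether $a=0$ or $a\in(0,1)$, since these two regimes require genuinely different inputs. For $a=0$, equation \eqref{equ-1-12} reduces to $\Delta u=0$ on all of $\mathbb{R}^n$, so that $u$ is simply a positive harmonic function on the whole space; the classical Liouville theorem for positive harmonic functions then forces $u$ to be constant. This case cannot be reduced to Corollary \ref{str-1}: as already noted after that corollary, its conclusion is false for $a=0$ (e.g.\ $u=x_n$ is positive and harmonic on $\mathbb{R}^n_+$ without being constant), so the use of global harmonicity and the whole-space Liouville theorem is essential precisely here.

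For $0<a<1$ the idea is to reduce to Corollary \ref{str-1} on each half space separately and then glue. First restrict $u$ to the open upper half space, where $|x_n|^a=x_n^a$; there $u$ is a positive $C^2(\mathbb{R}^n_+)$ function solving $div(x_n^a\nabla u)=0$, and since $u\in C^2(\mathbb{R}^n)$ its restriction is automatically in $C^1(\overline{\mathbb{R}^n_+})$. Corollary \ref{str-1} then gives $u\equiv C_+$ on $\overline{\mathbb{R}^n_+}$ for some positive constant $C_+$. To treat the lower half I would exploit the evenness of the weight $|x_n|^a$: setting $v(x',x_n)=u(x',-x_n)$, the invariance of $|x_n|^a$ under $x_n\mapsto -x_n$ together with the fact that the Laplacian commutes with this reflection shows, after a short direct computation of $div(x_n^a\nabla v)$, that $v$ is again a positive $C^2(\mathbb{R}^n_+)\cap C^1(\overline{\mathbb{R}^n_+})$ solution of $div(x_n^a\nabla v)=0$ on $\mathbb{R}^n_+$. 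Applying Corollary \ref{str-1} once more yields $v\equiv C_-$ on $\overline{\mathbb{R}^n_+}$, that is, $u\equiv C_-$ on the closed lower half space.

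It remains to match the two constants across the hyperplane $\{x_n=0\}$: since $u$ is continuous (indeed $C^2$) on all of $\mathbb{R}^n$, evaluating the two one-sided identities on $\{x_n=0\}$ gives $C_+=u|_{x_n=0}=C_-$, so $u$ is globally constant. There is no deep obstacle once Corollary \ref{str-1} is available; the only points demanding care are the recognition that $a=0$ must be handled separately through the classical Liouville theorem, and the verification that the reflected function $v$ satisfies the same weighted equation, which is a brief but necessary computation.
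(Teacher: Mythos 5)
Your proof is correct and follows essentially the same route the paper intends: the classical Liouville theorem for positive harmonic functions handles $a=0$, while for $a\in(0,1)$ Corollary \ref{str-1} is applied to the restriction of $u$ to each half space (via the reflection $x_n\mapsto -x_n$, which preserves the weighted equation), with continuity of $u$ across $\{x_n=0\}$ matching the two constants. The only addition beyond the paper's one-line derivation is your explicit verification that the reflected function solves the same equation, which is a worthwhile detail to record.
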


%
\smallskip

We illustrate some motivations for our work below.

\subsection{Unique solution to  the extension operators} In  \cite{CS07}, Caffarelli and Silvestre study the following extension problem for $a \in (-1, 1)$:
\begin{equation}\label{equ1.2-1}
	\begin{cases}
	div(x_{n}^{a} \nabla u)=0,  & \quad \text{in}\quad \mathbb{R}^n_+,\\
	u(x', 0)=f(x'), & \quad\text{on}\quad \partial\mathbb{R}^n_+.
	\end{cases}
	\end{equation}
Besides many interesting properties were obtained, their study provides a nice ``pointwise'' view on a global defined fractional Laplacian operator:
$$(-\Delta)^{\frac{1-a}2}f(x')=- \lim_{x_n \to 0^+} x_n^a\frac {\partial u}{\partial x_n}(x', x_n).$$	

For $f(x')$ in a good space (for example, Fourier transform can be applied on $f(x')$), solution $u(x', x_n)$ to \eqref{equ1.2-1} can be represented, up to a constant multiplier, by
\begin{equation}\label{equ1.2-2}
u(x', x_n)= \int_{\partial \mathbb{R}_+^n} \frac{ x_n^{{1-a}} f(y)}{(|x'-y|^2+x_n^2)^{\frac{n-a}2}}dy.
\end{equation}
One can also view $u(x', x_n)$ as an extension of $f(x')$ via operator ${\cal P}_a$:
$$
u(x', x_n)= {\cal P}_a(f) :=\int_{\partial \mathbb{R}_+^n}  P_a(x'-y, x_n) f(y)dy,
$$
whose positive kernel is
\begin{equation}\label{equ1.2-3}
P_a(x', x_n)=\frac{ x_n^{{1-a}}}{(|x'|^2+x_n^2)^{\frac{n-a}2}}.
\end{equation}

Hang, Wang and Yan \cite{HWY08} obtain the sharp $L^p$ estimates on ${\cal P}_0$ for $n \ge 3$ (the standard harmonic extension with Poisson kernel). Their results were generalized by Chen \cite{Chen14} for general $a > 2-n$. Note that for $n=2$, from Chen's result one can obtain a different proof of two dimensional  analytic isoperimetric inequality for simply connected domains due to Carleman \cite{Cal}. 


Quite naturally, one may ask: are there other solutions to \eqref{equ1.2-1} besides the function given in \eqref{equ1.2-2}? Generally, the answer is yes, since there are many sign-changing solutions to \eqref{equ}.  However, if one only considers bounded solutions, our Theorem \ref{main-1} indicates that the  function given in \eqref{equ1.2-2} is the only one.

\smallskip

To extend the classical Hardy-Littlewood-Sobolev inequality on the upper half space, Dou and Zhu \cite{DZ15} studied the following extension operator for $\alpha \in (1, n):$
\begin{equation}\label{equ1.2-4}
u(x', x_n)= {\cal E}_\alpha  (f) :=\int_{\partial \mathbb{R}^n_+}  E_\alpha (x'-y, x_n) f(y)dy:= \int_{\partial \mathbb{R}^n_+}\frac{f(y)}{(|x'-y|^2+x_n^2)^{\frac{n-\alpha}2}}dy.
\end{equation}
The sharp $L^p$ estimates were obtained in \cite{DZ15}. Later, more general extension operators on the upper half space were studied by Dou, Guo and Zhu \cite{DGZ17} and Gluck \cite{G18}.

Direct computation shows that $u(x', x_n)= {\cal E}_{2-a}f$, up to some constant multiplier,  satisfies
\begin{equation}\label{equ1.2-5}
	\begin{cases}
	div(x_{n}^{a} \nabla u)=0,  & \quad \text{in}\quad \mathbb{R}^n_+,\\
	x_n^a \frac{\partial u}{\partial x_n}=f(x')&\quad\text{on}\quad \partial\mathbb{R}^n_+.
	\end{cases}
	\end{equation}
Theorem \ref{main-2} indicates that for $a \in (\max\{2-n,-1\}, 1)$ the bounded solution to \eqref{equ1.2-5} is unique.

\subsection{New view point on the positive kernels} The classical way to find the fundamental solution to Laplacian operator on $\mathbb{R}^n$ is to solve an ordinary differential equation, by assuming that the solution is radially symmetric. This approach certainly fails if the domain is the upper half space.

The other  view point to find the fundamental solution could be like this. First, the constant solution $u=C$ is the only positive harmonic solutions in the whole space $\mathbb{R}^n$ (for simplicity, let us just consider  $n \ge 3$). Its kelvin transformation: $v(x)=\frac 1{|x|^{n-2}}u(\frac x{|x|^2})$, which is a positive harmonic function on $\mathbb{R}^n \setminus \{0\}$, will yield the fundamental solution (up to a constant multiplier).

To find the positive kernel for the equation \eqref{equ1.2-1} and \eqref{equ1.2-5}, we first have the following observation, which will be proved in next section.

\smallskip
\noindent{\bf Lemma \ref{invariant} } If  $u(x)\in C^2(\mathbb{R}_+^n)$ satisfies the equation $div(x_{n}^{a} \nabla u)=0$ on $\mathbb{R}_+^n$, then $v(x)=\frac 1{|x|^{n-2+a}}u(\frac x{|x|^2})$ satisfies the same equation.
\smallskip

Combining Lemma \ref{invariant} with Theorem \ref{main-1} and \ref{main-2}, we know that the kernel for  the equation \eqref{equ1.2-1} and \eqref{equ1.2-5} for $a\ne 2-n$, up to a constant multiplier,  are given by
$$
\Gamma_d=\frac {x_n^{1-a}}{|x|^{n-a}},  \quad \text{and} \quad \  \Gamma_n=\frac 1{|x|^{n-2+a}}
$$
respectively. 

In \cite{DGZ17}  Dou, Guo and Zhu studied a general extension operator using a kernel obtained by taking a partial derivative of Riesz kernel along $x_n$ direction. Later, Gluck \cite{G18} studied a more general extension operator ${\cal E}_{\alpha, \beta}$ with the positive kernel
$$
E_{\alpha, \beta}(x', x_n)= \frac{ x_n^{{\beta}}}{(|x'|^2+x_n^2)^{\frac{n-\alpha}2}}$$
for $\beta\ge0, \  0<\alpha+\beta<n-\beta.$
Notice that
$$x_n^b E_{a, 1-a-b}(x', x_n)=\Gamma_d(x', x_n).$$
So all these in \cite{DGZ17} and \cite{G18} are really not  ``new'' positive kernels.

\subsection{Discussion} 
We point out that: for $a=0$, Theorem \ref{main-1} seems to be a folklore for nonnegative harmonic functions. We do not know the original proof for this fact.  One way to prove it is to adapt the approach by  Gidas and Spuck  in \cite{GS82}. Unfortunately, It seems to us that their approach only works for nonnegative functions and for $a=0$.  Here, we use the method of moving sphere, introduced by Li and Zhu in \cite{LZ95}. Note that we only assume that $u(x)$ is bounded from below in Theorem \ref{main-1}.

 For $a=0$, Theorem \ref{main-2} (after we make an even reflection of the solutions)  follows from the classical Liouville theorem in the whole space: the only positive harmonic functions in $\mathbb{R}^n$ are positive constants.
It seems that Theorem \ref{main-2} is still true for $a \notin (-1, 1)$.  But our method does not work. 

It is also interesting to extend Corollary 1.3 to other unbounded domains.

\section{Invariance}

For any fixed $x\in\partial\mathbb{R}^n_+$ and $\lambda>0$, we define
$$y^{x,\lambda}=x+\frac{\lambda^2(y-x)}{|y-x|^2}, \quad \forall y\in \overline{ \mathbb{R}^n_+},$$
and
  $$u_{x,\lambda}(y)=\frac{\lambda^{n-2+a}}{|y-x|^{n-2+a}}u(y^{x, \lambda}), \quad \forall y \in \overline{\mathbb{R}^n_+}.$$

We have the  following invariant property.
\begin{lemma}\label{invariant}
	If  $u(y)\in C^2(\mathbb{R}_+^n)$ satisfies equation $div(y_{n}^{a} \nabla u)=0$ on $\mathbb{R}_+^n$, then for any $x\in \partial \mathbb{R}^n_+$ and $\lambda>0$, $u_{x, \lambda}(y)$ satisfies the same equation. 
		\end{lemma}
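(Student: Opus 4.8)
The plan is to exploit the translation and dilation symmetries of the equation to reduce to the model inversion centered at the origin with $\lambda=1$, and then to establish a single pointwise transformation identity for the operator. First, since the weight $y_n^a$ depends only on $y_n$ and $x_n=0$, horizontal translations $y\mapsto y+(\tau',0)$ preserve the equation; and because the operator is homogeneous, dilations $y\mapsto\mu y$ send solutions to solutions, as one checks from $\operatorname{div}(y_n^a\nabla(u(\mu\cdot)))=\mu^{2-a}[\operatorname{div}(y_n^a\nabla u)](\mu\cdot)$. Writing $\psi(z)=u(x+\lambda z)$, these two invariances show $\psi$ solves the equation, and a short computation gives $u_{x,\lambda}(y)=\psi_{0,1}((y-x)/\lambda)$. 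Hence it suffices to treat $x=0$, $\lambda=1$, i.e.\ to show that $v(y):=|y|^{-(n-2+a)}u(\xi)$, with $\xi:=y/|y|^2$, solves $\operatorname{div}(y_n^a\nabla v)=0$ whenever $u$ does.

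The inversion $\xi=y/|y|^2$ is conformal: its Jacobian is $D\xi(y)=|y|^{-2}R(y)$ with $R(y)=I-2\hat y\otimes\hat y$ symmetric and orthogonal, $|\xi|=|y|^{-1}$, and, crucially, $\xi_n=y_n/|y|^2$, so the upper half space is preserved and $\xi_n^a=|y|^{-2a}y_n^a$. The core of the proof is the pointwise identity
\[
\operatorname{div}_y\!\big(y_n^a\nabla_y v\big)(y)=|y|^{-(n+2-a)}\,\big[\operatorname{div}_\xi\!\big(\xi_n^a\nabla_\xi u\big)\big](\xi),
\]
which, once proved, yields the lemma at once since the bracket on the right vanishes at every $\xi\in\mathbb{R}^n_+$. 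For $a=0$ this is the classical Kelvin identity $\Delta(|y|^{2-n}u(\xi))=|y|^{-n-2}(\Delta u)(\xi)$.

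To prove the identity I would split $v=\phi\,U$ with $\phi(y)=|y|^{-(n-2+a)}$ and $U=u\circ\xi$, and expand $\operatorname{div}(y_n^a\nabla v)=y_n^a\Delta v+a y_n^{a-1}\partial_n v$ by the product rule. The terms in which no derivative falls on $U$ assemble into $U\cdot\operatorname{div}(y_n^a\nabla\phi)$, and one checks directly that $\phi$ itself solves the equation (this is precisely the identity applied to $u\equiv1$), so these terms drop. For the remaining terms I would use $\nabla U=|y|^{-2}R\,(\nabla u)(\xi)$ together with the standard second-derivative computation for composition with the inversion, which produces the leading factor $|y|^{-4}(\Delta u)(\xi)$ plus first-order contributions; combining these with the two drift terms coming from the weight and repeatedly using $R\hat y=-\hat y$, $RR^\top=I$ and $\xi_n^a=|y|^{-2a}y_n^a$ reorganizes everything into the right-hand side above.

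The main obstacle is exactly this bookkeeping in the last step: the lower-order terms generated by differentiating $U=u\circ\xi$ twice must be matched against the drift term $a y_n^{a-1}\partial_n v$ produced by the inhomogeneous weight, and all the powers of $|y|$ and factors of $y_n$ must collapse to the single clean factor $|y|^{-(n+2-a)}$ multiplying $\xi_n^a\Delta_\xi u+a\xi_n^{a-1}\partial_{\xi_n}u$. An alternative that sidesteps the second-derivative computation is to prove instead that the weighted Dirichlet energy $\int y_n^a|\nabla u|^2$ is invariant under $u\mapsto u_{x,\lambda}$ and to polarize; the cost there is controlling the boundary and the singularity at $x$ in the integration by parts, so I would still favor the direct pointwise computation for this lemma.
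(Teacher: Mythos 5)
Your proposal is correct, and every identity you assert checks out: the dilation scaling $\mu^{2-a}$, the reduction $u_{x,\lambda}(y)=\psi_{0,1}((y-x)/\lambda)$ with $\psi(z)=u(x+\lambda z)$, the fact that $\phi(y)=|y|^{-(n-2+a)}$ itself solves $div(y_n^a\nabla\phi)=0$, and the target identity $div_y(y_n^a\nabla_y v)(y)=|y|^{-(n+2-a)}\bigl[div_\xi(\xi_n^a\nabla_\xi u)\bigr](\xi)$, which is exactly the paper's factorization \eqref{tran} specialized to $x=0$, $\lambda=1$. However, your route to that identity is organized genuinely differently from the paper's. The paper makes no symmetry reduction: it differentiates $u_{x,\lambda}$ head-on for general $x$ and $\lambda$, recording $\partial_i u_{x,\lambda}$, $\partial_n u_{x,\lambda}$ and $\Delta u_{x,\lambda}$ explicitly in \eqref{gi}--\eqref{n}, and then assembles $div(y_n^a\nabla u_{x,\lambda})$ in one pass; the miracle is that the zeroth- and first-order terms cancel numerically. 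Your version explains that miracle structurally: the zeroth-order terms vanish because $\phi$ is itself a solution (the Kelvin transform of $u\equiv1$), and the second-order term collapses cleanly because the Jacobian factor $R=I-2\hat y\otimes\hat y$ is orthogonal. What your approach buys is less bookkeeping and a transparent reason why the exponent $n-2+a$ is the right one; what the paper's brute-force computation buys is the explicit formula \eqref{n} for $\partial_n u_{x,\lambda}$, which is not a throwaway — it is reused verbatim in the proof of Proposition \ref{lem4-1} to show that the weighted Neumann condition is preserved under the inversion, something your streamlined argument would have to recompute. One caveat: you leave the final matching of first-order terms against the drift $ay_n^{a-1}\partial_n v$ as a sketch rather than carrying it out, but since all the ingredients you list ($\nabla U=|y|^{-2}R(\nabla u)(\xi)$, $R\hat y=-\hat y$, $\xi_n^a=|y|^{-2a}y_n^a$) are correct and the step is routine, this is an omission of execution, not a gap in the argument.
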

	
\begin{proof}
	 By a direct computation, we have for $i=1,2,\cdots,n-1,$
	 
	 \begin{equation}\label{gi}
	 \begin{split}
	 	\partial_i u_{x,\lambda}(y)=&-\frac{(n-2+a)\lambda^{n-2+a}(y_i-x_i)}{|y-x|^{n+a}}u(y^{x,\lambda})
	 	+\frac{\lambda^{n+a}}{|y-x|^{n+a}}\partial_i u(y^{x,\lambda})\\
	 	&-\frac{2\lambda^{n+a}(y_i-x_i)}{|y-x|^{n+2+a}}\nabla u(y^{x,\lambda})\cdot(y-x),
	 	\end{split}
	 \end{equation}
for $i=n$, 
\begin{equation}\label{n}
\begin{split}
	\partial_n u_{x,\lambda}(y)=&-\frac{(n-2+a)\lambda^{n-2+a}y_n}{|y-x|^{n+a}}u(y^{x,\lambda})
	+\frac{\lambda^{n+a}}{|y-x|^{n+a}}\partial_n u(y^{x,\lambda})\\
&-\frac{2\lambda^{n+a}y_n}{|y-x|^{n+2+a}}\nabla u(y^{x,\lambda})\cdot(y-x),
\end{split}
\end{equation}
and 
\begin{equation*}
	\begin{split}
\Delta u_{x,\lambda}(y)=&\frac{\lambda^{n+2+a}}{|y-x|^{n+2+a}}(\Delta u)(y^{x,\lambda})
+\frac{2a\lambda^{n+a}}{|y-x|^{n+a+2}}\nabla u(y^{x,\lambda})\cdot(y-x)\\
&+\frac{a(n-2+a)\lambda^{n-2+a}}{|y-x|^{n+a}}u(y^{x,\lambda}).
\end{split}
\end{equation*}
Then
\begin{equation}\label{tran}
   \begin{split}
   	div(y_n^a\nabla u_{x,\lambda})(y)&=y_n^a\Delta u_{x,\lambda}(y)+ay_n^{a-1}\partial_n u_{x,\lambda}(y)\\
   	&=\frac{\lambda^{n+2+a}y_n^a}{|y-x|^{n+2+a}}(\Delta u)(y^{x,\lambda})+a\frac{\lambda^{n+a}y_n^{a-1}}{|y-x|^{n+a}}\partial_n u(y^{x,\lambda})\\
   	&=\frac{\lambda^{n+2-a}}{|y-x|^{n+2-a}}div (y_n^a\nabla u)(y^{x,\lambda})\\
	&=0.
   \end{split}
\end{equation}
 
\end{proof}

It will be interesting to further explore the geometric implication of the above invariance.
	
\section{Dirichlet condition}
We present the proof for Theorem \ref{main-1} in this section.  Noting the specialty of $a=2-n$  in Lemma \ref{invariant},
we divide the proof into three cases: $a>2-n$,  $a< 2-n$ and $a=2-n$.  We shall prove the results using  the method of moving sphere.
\smallskip


\noindent \textbf{Case {1}.} $a>2-n$.

\smallskip

Due to  technical difficulties in dealing  with the zero boundary condition, we shall classify  all  solutions bounded from below plus a positive constant instead. It is sufficient to prove
\begin{theorem}\label{main-1-1}
Assume $n\geq2$. Suppose $u(y) \in C^2(\mathbb{R}^n_{+})\cap C^0(\overline {\mathbb{R}^n_{+}})$  satisfies
  \begin{equation}\label{1}
 \begin{cases}
  div(y_{n}^{a} \nabla u)=0, &\quad u>\frac 12 , \quad \text{in}\quad \mathbb{R}^n_+,\\
  u=1&\quad\text{on}\quad \partial\mathbb{R}^n_+.
  \end{cases}
\end{equation} 
If $a> 2-n$, then $u=C_*y_n^{1-a}+1$ for some nonnegative constant $C_*$. In particular, for $a \ge 1$, $u=1$.
\end{theorem}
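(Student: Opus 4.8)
The plan is to run the method of moving spheres centered at boundary points, but to exploit the fact that $u$ is bounded below together with $a>2-n$ so that every sphere can be pushed all the way to infinity; that is, I would establish the single comparison $u_{x,\lambda}\le u$ simultaneously for \emph{all} $\lambda>0$. Fix $x\in\partial\mathbb{R}^n_+$, set $\Sigma_\lambda=\{y\in\mathbb{R}^n_+:|y-x|>\lambda\}$ and $w_{x,\lambda}=u-u_{x,\lambda}$. By Lemma \ref{invariant} and linearity, $div(y_n^a\nabla w_{x,\lambda})=0$ in $\Sigma_\lambda$. First I would record the boundary data. On the spherical cap $\{|y-x|=\lambda,\ y_n>0\}$ one has $y^{x,\lambda}=y$ and $\lambda^{n-2+a}/|y-x|^{n-2+a}=1$, so $w_{x,\lambda}=0$. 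On the flat part $\{y_n=0,\ |y-x|>\lambda\}$ the reflected point $y^{x,\lambda}$ again lies on $\partial\mathbb{R}^n_+$, so $u(y^{x,\lambda})=1$ and $w_{x,\lambda}=1-(\lambda/|y-x|)^{n-2+a}\ge0$ precisely because $n-2+a>0$. Finally, as $|y|\to\infty$ we have $y^{x,\lambda}\to x$, hence $u(y^{x,\lambda})\to u(x)=1$ while $(\lambda/|y-x|)^{n-2+a}\to0$, so $u_{x,\lambda}\to0$ and $\liminf w_{x,\lambda}\ge \tfrac12$. This last line is exactly where $a>2-n$ is needed.

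The key comparison step is to conclude $w_{x,\lambda}\ge0$ throughout $\Sigma_\lambda$. Since the factor in $u_{x,\lambda}$ is $\le1$ and $u$ is continuous on the closed ball swept out by $y^{x,\lambda}$, the function $u_{x,\lambda}$ is bounded and $w_{x,\lambda}$ is bounded below. If its infimum were negative it could not be approached along the finite boundary or at infinity (all values there are $\ge0$), so it would be attained at an interior point with $y_n>0$, where the operator is classically elliptic; the strong maximum principle forbids an interior minimum of a nonconstant solution. Hence $w_{x,\lambda}\ge0$. I expect this maximum-principle argument on the unbounded, degenerate domain—boundedness below, continuity up to the finite boundary, and ruling out that the infimum escapes to $\{y_n=0\}$ or to infinity—to be the main obstacle, and the only place where the weight and the sign condition $a>2-n$ genuinely enter.

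Once $u_{x,\lambda}\le u$ on $\Sigma_\lambda$ for all $\lambda>0$ and all $x\in\partial\mathbb{R}^n_+$, I would finish by a Hopf-type computation at the spheres. Fix an interior point $y_0$ with $(y_0)_n>0$ and any $x=(x',0)$; with $\lambda=|y_0-x|$ the point $y_0$ lies on the cap where $w_{x,\lambda}$ attains its minimum value $0$, so the outward radial derivative satisfies $\partial_r w_{x,\lambda}(y_0)\ge0$. Using the standard Kelvin-type identity $\partial_r u_{x,\lambda}(y_0)=-\partial_r u(y_0)-\tfrac{n-2+a}{\lambda}u(y_0)$ at $r=\lambda$, and multiplying through by $\lambda>0$, this becomes
\[
2\,\nabla u(y_0)\cdot(y_0-x)+(n-2+a)\,u(y_0)\ge0\qquad\text{for every }x'\in\mathbb{R}^{n-1}.
\]
The left side is affine in $x'$ with linear part $-2\,\nabla' u(y_0)\cdot x'$, where $\nabla'=(\partial_1,\dots,\partial_{n-1})$; an affine function bounded below on all of $\mathbb{R}^{n-1}$ must have vanishing linear part, forcing $\nabla' u(y_0)=0$. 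Since $y_0$ was arbitrary, $u=u(y_n)$ depends on $y_n$ alone.

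It then remains to integrate the reduced equation. For $u=u(y_n)$ the equation collapses to $(y_n^a u')'=0$, giving $y_n^a u'=c_1$, so $u=C_*y_n^{1-a}+c_2$ for $a\ne1$ (and $u=c_1\log y_n+c_2$ for $a=1$). Continuity of $u$ up to $\partial\mathbb{R}^n_+$ with $u=1$ there forces $c_2=1$ and eliminates the logarithm, while the bound $u>\tfrac12$ forces $C_*\ge0$; for $a\ge1$ the same continuity at $y_n=0$ forces $C_*=0$, i.e. $u\equiv1$. This yields $u=C_*y_n^{1-a}+1$ with $C_*\ge0$, as asserted.
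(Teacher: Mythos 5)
Your proposal is correct, and its overall strategy---moving spheres centered on $\partial\mathbb{R}^n_+$, with the comparison $u_{x,\lambda}\le u$ established at once for all radii, followed by reduction to an ODE in $y_n$---is the same as the paper's. The comparison step is essentially Lemma \ref{lem1}: your boundary and infinity estimates are exactly \eqref{bd} and \eqref{infinite-1}, and the only difference is technical (the paper truncates to the bounded annulus $B_N^+(x)\setminus\overline{B_\lambda^+(x)}$ and applies the maximum principle there, while you run a minimizing-sequence argument on the unbounded domain; this works because $\varliminf_{|y|\to\infty}w_{x,\lambda}\ge\frac12$ and $w_{x,\lambda}\ge 0$ on the finite boundary force any negative infimum to be attained at an interior point, where the strong maximum principle---together with the observation that a negative constant would violate the boundary data---gives the contradiction). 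Where you genuinely depart from the paper is the passage from the sphere comparison to $u=u(y_n)$: the paper quotes the calculus Lemma \ref{lem5} (from Dou--Zhu), which uses only continuity of $u$ and no equation, whereas you differentiate radially at the tangent sphere $\lambda=|y_0-x|$ and convert the first-order optimality condition $\partial_r w_{x,\lambda}(y_0)\ge 0$, via the Kelvin-type identity, into
\[
2\,\nabla u(y_0)\cdot(y_0-x)+(n-2+a)\,u(y_0)\ge 0\qquad\text{for all }x\in\partial\mathbb{R}^n_+,
\]
then kill the part linear in $x'$ to get $\nabla' u\equiv 0$. Both your Kelvin identity and the affine-function argument check out. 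Your route buys a self-contained, elementary finish with no external lemma, at the cost of using the interior $C^1$ regularity of $u$ (available here, since $u\in C^2(\mathbb{R}^n_+)$); the paper's cited lemma is more robust in that it applies to merely continuous functions, which is why it can be reused verbatim in the other cases of Theorem \ref{main-1}. Your concluding ODE analysis---$c_2=1$ and $C_*\ge 0$ from the boundary condition and the lower bound $u>\frac12$, and $C_*=0$ for $a\ge 1$ from continuity at $y_n=0$---agrees with the paper's.
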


From now on to the end of this section, we always assume solution $u(x) \in C^2(\mathbb{R}^n_{+})\cap C^0(\overline {\mathbb{R}^n_{+}})$.  W first have
 \begin{lemma}\label{lem1} Assume that $a> 2-n$ and $u$ satisfies conditions in Theorem \ref{main-1-1}.
    For any  $x\in\partial \mathbb{R}^n_+$, and $\lambda>0$, we have that  
      $$u_{x,\lambda}(y)\leq u(y), \ \ \ \ \ \forall  y\in\mathbb{R}^n_+\backslash B_\lambda(x).$$
 \end{lemma}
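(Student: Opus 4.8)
The plan is to set $w = u - u_{x,\lambda}$ and to prove $w \ge 0$ on $\Omega := \mathbb{R}^n_+ \setminus \overline{B_\lambda(x)}$ by the maximum principle. By Lemma \ref{invariant} both $u$ and $u_{x,\lambda}$ solve $\mathrm{div}(y_n^a\nabla\,\cdot\,)=0$, so $w$ satisfies the same linear equation in $\Omega$; in the open half space this reads $\Delta w + \tfrac{a}{y_n}\partial_n w = 0$, which is uniformly elliptic on any subset bounded away from $\{y_n=0\}$. The argument then reduces to controlling the sign of $w$ on the three pieces of $\partial\Omega$ together with its behavior at infinity.

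Next comes the boundary analysis, which is where the hypothesis $a>2-n$ enters. On the inversion sphere $\partial B_\lambda(x)$ one has $y^{x,\lambda}=y$ and $\lambda^{n-2+a}/|y-x|^{n-2+a}=1$, so $u_{x,\lambda}=u$ and $w=0$. On the flat boundary $\{y_n=0\}\setminus B_\lambda(x)$, the point $y^{x,\lambda}$ again lies on $\partial\mathbb{R}^n_+$ (the inversion is centered at $x\in\partial\mathbb{R}^n_+$ and preserves the hyperplane), so the Dirichlet condition gives $u(y)=u(y^{x,\lambda})=1$ and hence $u_{x,\lambda}(y)=\lambda^{n-2+a}/|y-x|^{n-2+a}$. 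Since $a>2-n$ forces $n-2+a>0$ and $|y-x|\ge\lambda$ there, this prefactor is $\le 1$, whence $w(y)=1-\lambda^{n-2+a}/|y-x|^{n-2+a}\ge0$. Finally, as $|y|\to\infty$ we have $y^{x,\lambda}\to x$, so $u(y^{x,\lambda})\to u(x)=1$ while the prefactor tends to $0$; thus $u_{x,\lambda}(y)\to0$, and since $u>\tfrac12$ we obtain $\liminf_{|y|\to\infty}w\ge\tfrac12>0$.

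With these facts the maximum principle applies on the (possibly unbounded) domain $\Omega$ as follows. Suppose $\Sigma:=\{y\in\Omega: w(y)<0\}$ is nonempty. The decay at infinity shows $w>0$ for $|y|$ large, so $\Sigma$ is bounded; together with $w\ge0$ on both boundary pieces this makes $\overline\Sigma$ compact, confines $\Sigma$ to the open half space, and gives $w=0$ on $\partial\Sigma$. Then $w$ attains its negative minimum at an interior point $y_*\in\Sigma$, where the equation is uniformly elliptic; the strong maximum principle forces $w\equiv w(y_*)<0$ on the connected component of $\Sigma$ containing $y_*$, contradicting $w=0$ on $\partial\Sigma$. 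Hence $\Sigma=\emptyset$ and $w\ge0$ on $\Omega$.

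The main obstacle is the degeneracy of the weight $y_n^a$ on the flat boundary combined with the unboundedness of $\Omega$, which blocks a naive application of a single maximum principle on $\overline\Omega$. This is circumvented precisely by trapping the putative negative set $\Sigma$ in a compact region of the open half space—using the lower bound $u>\tfrac12$ to kill the contribution at infinity and the constant Dirichlet datum together with $a>2-n$ to secure the sign on the flat boundary—so that only the non-degenerate interior maximum principle is invoked. It is worth noting that no genuine ``moving'' of $\lambda$ is needed for this lemma: the comparison holds for every fixed $\lambda>0$, and the variation in $\lambda$ will instead be exploited in the subsequent step to pin down the explicit form $u=C_*y_n^{1-a}+1$.
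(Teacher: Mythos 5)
Your proof is correct and follows essentially the same route as the paper: the same comparison function $w=u-u_{x,\lambda}$, the same sign analysis on the flat boundary (using $n-2+a>0$), on $\partial B_\lambda(x)$, and at infinity (using $u>\tfrac12$ and the decay of $u_{x,\lambda}$), concluded by the maximum principle. The only difference is in implementation: the paper truncates at a large half-ball and applies the maximum principle on the bounded domain $B_N^+(x)\setminus\overline{B_\lambda^+(x)}$, whereas you trap the putative negative set and invoke the interior strong maximum principle --- a minor variant whose merit is that it makes explicit why the degeneracy of the weight $y_n^a$ at $\{y_n=0\}$ causes no trouble.
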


\begin{proof}
	For any fixed $x\in\partial\mathbb{R}^n_+$ and $\lambda>0$, define $$w_{x,\lambda}(y)=u(y)-u_{x,\lambda}(y).$$
	Since $n-2+a>0$, we have that
	\begin{equation}\label{infinite-1}
		\varliminf_{|y|\rightarrow\infty}w_{x,\lambda}(y)=\varliminf_{|y|\rightarrow\infty}u(y)-\lim_{|y|\rightarrow\infty}\frac{\lambda^{n-2+a}}{|y-x|^{n-2+a}}u(x+\frac{\lambda^2(y-x)}{|y-x|^2})\geq \frac{1}{2},
	\end{equation}
and
\begin{equation}\label{bd}
	w_{x,\lambda}(y)=1-\frac{\lambda^{n-2+a}}{|y-x|^{n-2+a}}>0\quad\textrm{on }\partial\mathbb{R}^n_+\backslash \overline{B_\lambda(x)}.
\end{equation}
By \eqref{infinite-1}, we know that there is an $N=N(x,\lambda)>0$ large enough, such that $w_{x,\lambda}\geq C>0$ in $\mathbb{R}^n_+\backslash \overline{B_N(x)}$. Define $\Omega=B^+_N(x)\backslash \overline{B_\lambda^+(x)}$, then we have 
	$$
	\begin{cases}
	div(y_{n}^{a} \nabla w_{x,\lambda})=0, & \quad \text{in}\quad \Omega   \\
	w_{x,\lambda} \ge 0 &\quad\text{on}\quad \partial  \Omega.
			\end{cases}
	$$
By the maximum principle, we know $w_{x,\lambda}\geq 0$ in $\Omega$. Therefore, $w_{x,\lambda}\geq 0$ in $\mathbb{R}^n_+\backslash B_{\lambda}(x)$.
\end{proof}

\smallskip

To conclude our proof, we need the following key lemma for the method of moving sphere. See, for example, the proof in Dou and Zhu \cite{DZ15}.
  \begin{lemma}\label{lem5}
 Assume $f(y) \in C^0(\overline {\mathbb{R}^n_+})$, $n\geq2$, and  $\tau>0$. If
  $$(\frac{\lambda}{|y-x|})^{\tau}f(x+\frac{\lambda^2(y-x)}{|y-x|^2})\leq f(y), \quad \forall \lambda>0,\;x\in\partial\mathbb{R}^n_+,\;|y-x|\geq\lambda,\, y\in \mathbb{R}^n_+,$$
  then $$f(y)=f(y',y_n)=f(0',y_n),\quad \forall y=(y', y_n)\in\mathbb{R}^n_+.$$
\end{lemma}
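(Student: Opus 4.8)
The plan is to recast the hypothesis as a monotonicity statement along rays issuing from the boundary, and then to exploit the crucial feature that the centers $x$ are allowed to range over the \emph{entire} hyperplane $\partial\mathbb{R}^n_+$. First I would rewrite the inversion in a more transparent form. Given $x\in\partial\mathbb{R}^n_+$, $\lambda>0$ and $y\in\mathbb{R}^n_+$ with $|y-x|\ge\lambda$, put $z=y^{x,\lambda}$ and $\rho=\lambda^2/|y-x|^2\in(0,1]$. A direct computation gives $z=\rho y+(1-\rho)x$, $|z-x|=\rho|y-x|$ and $\lambda/|y-x|=\sqrt{\rho}$, so the assumed inequality is exactly
\[
\rho^{\tau/2}\, f\big(\rho y+(1-\rho)x\big)\le f(y),\qquad \rho\in(0,1],\ x\in\partial\mathbb{R}^n_+,\ y\in\mathbb{R}^n_+.
\]
Equivalently, for each boundary point $x$ the weighted quantity $\Phi_x(y):=|y-x|^{\tau/2}f(y)$ is nondecreasing along every ray emanating from $x$ into $\mathbb{R}^n_+$.

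To see why this should pin down the $y'$-dependence, it is instructive to pretend momentarily that $f$ is smooth and positive. Differentiating $\log\Phi_x$ in the outward radial direction $\omega=(y-x)/|y-x|$ and using that $\Phi_x$ is nondecreasing yields $(y-x)\cdot\nabla\log f(y)\ge-\tau/2$. Writing $x=(x',0)$ and splitting the gradient into its horizontal and vertical parts, this reads $(y'-x')\cdot\nabla'\log f(y)+y_n\,\partial_n\log f(y)\ge-\tau/2$ for \emph{every} $x'\in\mathbb{R}^{n-1}$. Since $y'-x'$ sweeps out all of $\mathbb{R}^{n-1}$, the term $(y'-x')\cdot\nabla'\log f(y)$ is unbounded below unless $\nabla'\log f(y)=0$; hence $\nabla' f\equiv0$ and $f$ depends on $y_n$ alone. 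The only genuine obstacle is that $f$ is merely continuous, so this differential computation must be replaced by a finite-difference argument; the idea of letting the center escape to infinity along the boundary is what makes it work.

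I would carry out that replacement as follows. Fix $y=(y',t)\in\mathbb{R}^n_+$ and an arbitrary horizontal vector $h'\in\mathbb{R}^{n-1}$. For small $\delta\in(0,1)$ choose the center $x_\delta=(y'+h'/\delta,\,0)\in\partial\mathbb{R}^n_+$ and take $\rho=1-\delta$. A short computation gives
\[
\rho y+(1-\rho)x_\delta=\big(y'+h',\,(1-\delta)t\big)=:w_\delta\longrightarrow (y'+h',\,t)\quad\text{as }\delta\to0^+,
\]
the point being that although $\rho\to1$, the factor $(1-\rho)x_\delta=\delta\,x_\delta$ stays bounded and tends to $(h',0)$ because $x_\delta\to\infty$. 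Applying the displayed inequality with this data gives $(1-\delta)^{\tau/2}f(w_\delta)\le f(y)$; letting $\delta\to0^+$ and invoking the continuity of $f$ yields $f(y'+h',t)\le f(y',t)$. Since $h'$ is arbitrary, replacing $y'$ by $y'+h'$ and $h'$ by $-h'$ produces the reverse inequality, so that $f(y'+h',t)=f(y',t)$ for all $h'\in\mathbb{R}^{n-1}$. Hence $f(y',y_n)=f(0',y_n)$, as claimed. The single delicate point is the coupled limit $x_\delta\to\infty,\ \rho\to1$: it is precisely the freedom to push the center to infinity along the boundary that converts radial monotonicity into horizontal invariance, while the weight $(1-\delta)^{\tau/2}\to1$ guarantees that no multiplicative constant is lost in the passage to the limit.
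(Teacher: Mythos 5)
Your proof is correct: the reformulation with $\rho=\lambda^2/|y-x|^2$, the choice $x_\delta=(y'+h'/\delta,0)$, $\rho=1-\delta$ giving exactly $w_\delta=(y'+h',(1-\delta)t)$, and the limit $\delta\to0^+$ using continuity of $f$ yield $f(y'+h',t)\le f(y',t)$, and symmetry in $h'$ finishes the argument. The paper itself does not prove this lemma but defers to Dou--Zhu \cite{DZ15}, and your argument --- letting the inversion centers escape to infinity along $\partial\mathbb{R}^n_+$ so that the inversions degenerate into horizontal translations while the factor $(1-\delta)^{\tau/2}\to1$ --- is essentially the standard proof used there, so it matches the intended approach.
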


From Lemma \ref{lem1} and Lemma \ref{lem5},  we know that $u(y', y_n)=u(y_n).$   Then by solving the corresponding ODE, we obtain  $u=C_1 y_n^{1-a}+C_2$. From the boundary condition, we know:  for $2-n<a<1$, $C_2=1$; And for $a\geq 1$, $C_1$ must be $0$ and $C_2$ must be $1$. We thus complete the proof of  Theorem \ref{main-1-1}.  
\bigskip


\noindent \textbf{Case {2}.} $a < 2-n$. 

For $a <2-n$, it is easy to check that $1+y_n^{1-a}$ does not satisfy the monotonic property in Lemma \ref{lem1}, but $y_n^{1-a}-1$ does.  It is sufficient to prove


\begin{theorem}\label{main-5-1}
Assume $n\geq2$, and $a <2-n$.  If  $u(y) \in C^2(\mathbb{R}^n_{+})\cap C^0(\overline {\mathbb{R}^n_{+}})$ satisfies
	\begin{equation}\label{equ-main-5-1}
	\begin{cases}
	div(y_{n}^{a} \nabla u)=0, &\quad u\ge -2, \quad \text{in}\quad \mathbb{R}^n_+,\\
	u=-1 &\quad\text{on}\quad \partial\mathbb{R}^n_+,
	\end{cases}
	\end{equation}
   then $u=C_*y_n^{1-a}-1$  for some nonnegative constant $C_*$.
          \end{theorem}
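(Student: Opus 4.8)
The plan is to run the method of moving spheres exactly as in Case~1, the only --- but essential --- difference being the arithmetic of signs forced by $n-2+a<0$. This is precisely why the normalization $u=-1$ on $\partial\mathbb R^n_+$ (rather than $u=1$) is used: it is the content of the quoted remark that $y_n^{1-a}-1$, and not $1+y_n^{1-a}$, respects the monotonicity of Lemma \ref{lem1}.

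First I would prove the analogue of Lemma \ref{lem1}: for every $x\in\partial\mathbb R^n_+$ and every $\lambda>0$,
$$u_{x,\lambda}(y)\le u(y),\qquad \forall\,y\in\mathbb R^n_+\setminus B_\lambda(x).$$
Set $w_{x,\lambda}=u-u_{x,\lambda}$; by Lemma \ref{invariant} it solves $div(y_n^a\nabla w_{x,\lambda})=0$ in the half-annulus $\Omega=B_N^+(x)\setminus\overline{B_\lambda^+(x)}$. The boundary bookkeeping now reads as follows. On $\partial B_\lambda(x)$ one has $y^{x,\lambda}=y$ and the conformal factor equals $1$, so $w_{x,\lambda}=0$. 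On the flat piece $\partial\mathbb R^n_+\setminus\overline{B_\lambda(x)}$ the inversion preserves $\partial\mathbb R^n_+$, hence $u(y^{x,\lambda})=-1$ and
$$w_{x,\lambda}(y)=-1+\Big(\frac{\lambda}{|y-x|}\Big)^{n-2+a}>0,$$
because $|y-x|>\lambda$ and $n-2+a<0$ make the power strictly larger than $1$. Finally, as $|y|\to\infty$ one has $y^{x,\lambda}\to x$, so $u(y^{x,\lambda})\to u(x)=-1<0$ while the factor $(\lambda/|y-x|)^{n-2+a}\to+\infty$, whence $u_{x,\lambda}(y)\to-\infty$ and $w_{x,\lambda}(y)\to+\infty$; thus for $N$ large $w_{x,\lambda}>0$ on $\mathbb R^n_+\setminus B_N(x)$. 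Combining the three computations gives $w_{x,\lambda}\ge0$ on $\partial\Omega$, and since $w_{x,\lambda}$ is continuous on the compact set $\overline\Omega$ while the operator is uniformly elliptic on compact subsets of $\{y_n>0\}$, the maximum principle (a negative minimum would have to be attained at an interior point with $y_n>0$, which is impossible) yields $w_{x,\lambda}\ge0$ in $\Omega$, hence in all of $\mathbb R^n_+\setminus B_\lambda(x)$.

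Second, I would pass from this monotonicity to the conclusion $u(y)=u(0',y_n)$, i.e. that $u$ is independent of $y'$. Here lies the main obstacle: the scaling exponent is $\tau=n-2+a<0$, so Lemma \ref{lem5}, stated for $\tau>0$, does not apply verbatim. What saves the argument is that the normalization $u=-1$ has arranged the inequality $(\lambda/|y-x|)^{n-2+a}u(y^{x,\lambda})\le u(y)$ in the correct direction for all $x$ and $\lambda$ (and, by the conformal identity $(y^{x,\lambda})^{x,\lambda}=y$, its reverse for $|y-x|\le\lambda$). I would therefore establish the negative-exponent analogue of Lemma \ref{lem5}: the translation mechanism underlying that lemma --- compose two boundary inversions of equal radius and let the radius tend to infinity to generate a translation parallel to $\partial\mathbb R^n_+$ --- uses only the two-sided scaling relation, not the sign of $\tau$, and yields invariance of $u$ under all translations in the $y'$ directions, hence $u=u(0',y_n)$. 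Making this adaptation rigorous (equivalently, checking that the first-order touching condition at $\lambda=|y-x|$ retains the right sign when $\tau<0$) is the one genuinely new point compared with Case~1.

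Finally, once $u=u(y_n)$ the equation reduces to the ordinary differential equation $(y_n^a u')'=0$, so $u=C_1y_n^{1-a}+C_2$ with $1-a>0$ (as $a<2-n\le0$). Letting $y_n\to0^+$ and using $u=-1$ on the boundary forces $C_2=-1$, while the lower bound $u\ge-2$ rules out $C_1<0$ (otherwise $u\to-\infty$ as $y_n\to\infty$); thus $C_1=C_*\ge0$ and $u=C_*y_n^{1-a}-1$, which completes the proof.
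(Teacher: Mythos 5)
Your proposal is correct and follows essentially the same route as the paper's own proof: the identical moving-sphere monotonicity lemma (the paper's Lemma \ref{lem1-1}), with the same three boundary computations --- $w_{x,\lambda}=0$ on $\partial B_\lambda(x)$, the inequality $-1+(\lambda/|y-x|)^{n-2+a}>0$ on the flat boundary, and $w_{x,\lambda}\to+\infty$ at infinity because $u(y^{x,\lambda})\to u(x)=-1<0$ while the conformal factor blows up --- followed by reduction to dependence on $y_n$ and the ODE.

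The one place you go beyond the paper is worth recording. The paper concludes Case 2 by citing Lemma \ref{lem5} as stated, even though that lemma assumes $\tau>0$ while here $\tau=n-2+a<0$; you correctly flag this mismatch and sketch the repair. Your repair is valid: the proof of such calculus lemmas compares two points at the same height via inversions centered at boundary points receding to infinity, along which the conformal factor $(\lambda/|y-x|)^{\tau}$ tends to $1$, so only continuity of $f$ is used and the sign of $\tau$ is irrelevant; the conclusion $f(y)=f(0',y_n)$ holds for every real $\tau$. In this one respect your write-up is actually more careful than the paper's.
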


       First, we have   
  
  \begin{lemma}\label{lem1-1} Assume that $a<2-n$ and $u$ satisfies conditions in Theorem \ref{main-5-1}.
    For any  $x\in\partial \mathbb{R}^n_+$, and $\lambda>0$, we have that  
      $$u_{x,\lambda}(y)\leq u(y), \ \ \ \ \ \forall  y\in\mathbb{R}^n_+\backslash B_\lambda(x).$$
 \end{lemma}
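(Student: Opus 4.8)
The plan is to follow the proof of Lemma \ref{lem1} almost verbatim, tracking the sign changes forced by the new condition $n-2+a<0$. As there, I set $w_{x,\lambda}(y)=u(y)-u_{x,\lambda}(y)$; by Lemma \ref{invariant} and linearity of the operator, $w_{x,\lambda}$ solves $div(y_n^a\nabla w_{x,\lambda})=0$ in $\mathbb{R}^n_+\setminus\overline{B_\lambda(x)}$. The whole argument then reduces to checking that $w_{x,\lambda}\ge0$ on the sphere and on the flat boundary, controlling $w_{x,\lambda}$ at infinity, and invoking the maximum principle on an annular region.

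The step that genuinely distinguishes this case from Lemma \ref{lem1}, and which explains why the hypotheses were switched to $u\ge-2$ with $u=-1$ on $\partial\mathbb{R}^n_+$, is the behavior at infinity. Writing the conformal factor as $\lambda^{n-2+a}/|y-x|^{n-2+a}=\left(|y-x|/\lambda\right)^{2-n-a}$, the exponent $2-n-a>0$ now makes this factor tend to $+\infty$ as $|y|\to\infty$. Since $y^{x,\lambda}\to x\in\partial\mathbb{R}^n_+$ and $u$ is continuous up to the boundary, $u(y^{x,\lambda})\to u(x)=-1<0$, so $u_{x,\lambda}(y)\to-\infty$; using $u(y)\ge-2$ this gives $\varliminf_{|y|\to\infty}w_{x,\lambda}(y)=+\infty$, hence an $N=N(x,\lambda)$ with $w_{x,\lambda}>0$ on $\mathbb{R}^n_+\setminus\overline{B_N(x)}$. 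This is the mirror image of \eqref{infinite-1}: there the factor vanished at infinity and positivity came from the lower bound on $u$; here the factor blows up but is multiplied by a negative limit, which is precisely what the comparison profile $y_n^{1-a}-1$ reflects.

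Next I would record the two remaining boundary contributions. On $\partial B_\lambda(x)$ we have $y^{x,\lambda}=y$ and the factor equals $1$, so $w_{x,\lambda}=0$. For $y\in\partial\mathbb{R}^n_+$ with $|y-x|>\lambda$ the inversion keeps $y^{x,\lambda}\in\partial\mathbb{R}^n_+$, so $u(y^{x,\lambda})=-1$ and
\[
w_{x,\lambda}(y)=-1+\left(\frac{|y-x|}{\lambda}\right)^{2-n-a}>0,
\]
since the base exceeds $1$ and the exponent is positive. This is the analogue of \eqref{bd} with the constant term reversed in sign. Thus $w_{x,\lambda}\ge0$ on the full boundary of the bounded annulus $\Omega=B_N^+(x)\setminus\overline{B_\lambda^+(x)}$, and after applying the maximum principle on $\Omega$ and letting $N\to\infty$ I obtain $w_{x,\lambda}\ge0$ on $\mathbb{R}^n_+\setminus B_\lambda(x)$, which is the claim.

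The only point needing care—the main obstacle—is justifying the maximum principle for the weighted operator $div(y_n^a\nabla\cdot)$, whose weight $y_n^a$ is singular at the flat boundary when $a<2-n\le0$. I would handle this exactly as in Lemma \ref{lem1}: on the open half-space $\{y_n>0\}$ the equation reads $\Delta w_{x,\lambda}+(a/y_n)\partial_n w_{x,\lambda}=0$, whose coefficients are locally bounded and smooth on the connected open set $\Omega\subset\{y_n>0\}$, the singularity sitting only on the flat part of $\partial\Omega$. Hence a negative minimum of $w_{x,\lambda}$ over $\overline\Omega$, if it occurred, could not lie on $\partial\Omega$ (where $w_{x,\lambda}\ge0$), so it would be an interior minimum; by the strong maximum principle $w_{x,\lambda}$ would then be a negative constant on all of $\Omega$, contradicting its nonnegative boundary values via continuity up to $\partial\Omega$. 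Therefore $w_{x,\lambda}\ge0$ throughout $\Omega$.
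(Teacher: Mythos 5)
Your proof is correct and follows essentially the same route as the paper: the same comparison function $w_{x,\lambda}=u-u_{x,\lambda}$, the same blow-up analysis at infinity exploiting $n-2+a<0$ together with $u\ge-2$ and $u=-1$ on $\partial\mathbb{R}^n_+$, the same positivity check on the flat boundary and on $\partial B_\lambda(x)$, and the maximum principle on the half-annulus $\Omega=B_N^+(x)\setminus\overline{B_\lambda^+(x)}$. Your closing paragraph merely spells out the interior strong maximum principle argument that the paper leaves implicit when it writes ``by the maximum principle,'' so there is no substantive difference.
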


\begin{proof}
Similar to the proof of Lemma \ref{lem1}, 	for any fixed $x\in\partial\mathbb{R}^n_+$ and $\lambda>0$,  we define $$w_{x,\lambda}(y)=u(y)-u_{x,\lambda}(y).$$
Noting $\lim_{|y| \to 0} u(x+ y)=-1$, and  $n-2+a<0$, we have that
	\begin{equation}\label{infinite-2}
	\begin{split}
			\varliminf_{|y|\rightarrow\infty}w_{x,\lambda}(y)&=\varliminf_{|y|\rightarrow\infty}u(y)-\lim_{|y|\rightarrow\infty}\frac{\lambda^{n-2+a}}{|y-x|^{n-2+a}} u (x+\frac{\lambda^2(y-x)}{|y-x|^2}) \\
						& \ge -2+\lim_{|y|\rightarrow\infty} \frac{ \lambda^{n-2+a}}{2|y-x|^{n-2+a}}\\
			&=+\infty,
			\end{split}
	\end{equation}
and
\begin{equation}\label{bd-1}
	w_{x,\lambda}(y)=-1+\frac{\lambda^{n-2+a}}{|y-x|^{n-2+a}}>0\quad\textrm{on }\partial\mathbb{R}^n_+\backslash \overline{B_\lambda(x)}.
\end{equation}
By \eqref{infinite-2}, we know that there is an $N=N(x,\lambda)>0$ large enough, such that $w_{x,\lambda}\geq C>0$ in $\mathbb{R}^n_+\backslash \overline{B_N(x)}$. Define $\Omega=B^+_N(x)\backslash \overline{B_\lambda^+(x)}$, then we have 
	$$
	\begin{cases}
	div(y_{n}^{a} \nabla w_{x,\lambda})=0, & \quad \text{in}\quad \Omega   \\
	w_{x,\lambda} \ge 0 &\quad\text{on}\quad \partial  \Omega.
			\end{cases}
	$$
By the maximum principle, we know that  $w_{x,\lambda}\geq 0$ in $\mathbb{R}^n_+\backslash B_{\lambda}(x)$.
\end{proof}

\smallskip

Similarly, from Lemma \ref{lem1-1} and Lemma \ref{lem5},   we obtain  $u=C_* y_n^{1-a}-1$ for some nonnegative constant $C_*$, thus complete the proof of  Theorem \ref{main-5-1}.

\bigskip


\noindent \textbf{Case {3}.} $a =2-n$. 

We modify the $u_{x,\lambda}$ to be
$$u_{x,\lambda}(y)=u(y^{x,\lambda})+\ln \frac{\lambda}{|y-x|}.$$
Then $u_{x,\lambda}$ satisfies the same equation
$$div (y_n^{2-n}\nabla u)=0.$$

      
  \begin{lemma}\label{lem1-2} Assume that $u$ satisfies conditions in Theorem \ref{main-1} with $a=2-n$.
    For any  $x\in\partial \mathbb{R}^n_+$, and $\lambda>0$, we have that  
      $$u_{x,\lambda}(y)\leq u(y), \ \ \ \ \ \forall  y\in\mathbb{R}^n_+\backslash B_\lambda(x).$$
 \end{lemma}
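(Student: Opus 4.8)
The plan is to mirror the proofs of Lemma \ref{lem1} and Lemma \ref{lem1-1}, but now with the modified $u_{x,\lambda}(y)=u(y^{x,\lambda})+\ln\frac{\lambda}{|y-x|}$ appropriate to the critical exponent $a=2-n$. As before, I set $w_{x,\lambda}(y)=u(y)-u_{x,\lambda}(y)$ and aim to show $w_{x,\lambda}\ge 0$ on $\mathbb{R}^n_+\setminus B_\lambda(x)$ by the maximum principle applied on an annular region $\Omega=B_N^+(x)\setminus\overline{B_\lambda^+(x)}$. Since $u_{x,\lambda}$ solves the same divergence equation by the remark just above the lemma, $w_{x,\lambda}$ is a solution of $\operatorname{div}(y_n^a\nabla w_{x,\lambda})=0$ in $\Omega$, so everything reduces to checking the sign of $w_{x,\lambda}$ on the three pieces of the boundary: the inner sphere $\partial B_\lambda^+(x)$, the flat boundary $\partial\mathbb{R}^n_+$, and a large outer sphere.

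First I would handle the inner boundary and the flat boundary. On $|y-x|=\lambda$ the logarithmic term $\ln\frac{\lambda}{|y-x|}$ vanishes and $y^{x,\lambda}=y$, so $w_{x,\lambda}=0$ there, exactly as in the power-law cases. On the flat part $\partial\mathbb{R}^n_+\setminus\overline{B_\lambda(x)}$ I use the boundary condition $u=1$ (or whatever normalization is in force in Theorem \ref{main-1}): there $u(y)=1$ and $u(y^{x,\lambda})=1$ because $y^{x,\lambda}$ also lies on $\partial\mathbb{R}^n_+$, so $w_{x,\lambda}(y)=-\ln\frac{\lambda}{|y-x|}=\ln\frac{|y-x|}{\lambda}>0$ for $|y-x|>\lambda$. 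This is the analogue of \eqref{bd} and \eqref{bd-1}, and it is the point where the logarithm replaces the previous rational expression $1-\lambda^{n-2+a}/|y-x|^{n-2+a}$.

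The main obstacle, as in the earlier cases, is controlling $w_{x,\lambda}$ near infinity, and here it is genuinely more delicate because $n-2+a=0$ makes the conformal factor $|y-x|^{-(n-2+a)}$ equal to $1$, so the Kelvin-type decay that forced positivity in Lemmas \ref{lem1} and \ref{lem1-1} is gone. I would argue as follows: for $|y|$ large, $u(y^{x,\lambda})\to u(x)=1$ since $y^{x,\lambda}\to x\in\partial\mathbb{R}^n_+$, while $u(y)\ge \frac12$ is bounded below by hypothesis; hence
\begin{equation*}
\varliminf_{|y|\to\infty}w_{x,\lambda}(y)=\varliminf_{|y|\to\infty}\Bigl(u(y)-u(y^{x,\lambda})-\ln\tfrac{\lambda}{|y-x|}\Bigr)\ge \tfrac12-1+\lim_{|y|\to\infty}\ln\tfrac{|y-x|}{\lambda}=+\infty,
\end{equation*}
because the logarithm tends to $+\infty$ and dominates the bounded remainder. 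Thus there is $N=N(x,\lambda)$ large so that $w_{x,\lambda}\ge 0$ on $\mathbb{R}^n_+\setminus\overline{B_N(x)}$, giving $w_{x,\lambda}\ge 0$ on the outer sphere $\partial B_N^+(x)$.

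Having verified $w_{x,\lambda}\ge 0$ on all of $\partial\Omega$, the maximum principle for the degenerate elliptic operator $\operatorname{div}(y_n^a\nabla\,\cdot\,)$ yields $w_{x,\lambda}\ge 0$ throughout $\Omega$, and letting $N\to\infty$ gives $u_{x,\lambda}(y)\le u(y)$ on $\mathbb{R}^n_+\setminus B_\lambda(x)$, which is the assertion of Lemma \ref{lem1-2}. I expect the delicate point to be the infinity estimate: one must know that $u$ extends continuously to $\overline{\mathbb{R}^n_+}$ at the point $x$ (so that $u(y^{x,\lambda})\to u(x)=1$) and that the logarithmic divergence, rather than an algebraic one, suffices to overwhelm the merely bounded-below control on $u(y)$. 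Once that is in hand, the rest is a direct transcription of the previous two proofs with the new definition of $u_{x,\lambda}$.
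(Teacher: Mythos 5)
Your proposal is correct and follows essentially the same route as the paper: the same comparison function $w_{x,\lambda}=u-u_{x,\lambda}$ on the half-annulus $\Omega=B_N^+(x)\setminus\overline{B_\lambda^+(x)}$, the same boundary sign checks (with the logarithm $\ln\frac{|y-x|}{\lambda}$ replacing the power-law expression on the flat boundary), the same observation that the logarithmic divergence at infinity dominates the merely bounded-below behavior of $u$, and the same maximum principle conclusion. The only cosmetic discrepancy is your normalization $u=1$ on $\partial\mathbb{R}^n_+$; in this case the paper works directly with the conditions of Theorem \ref{main-1} ($u>-C$ and $u=0$ on the boundary), but since the boundary values cancel in $w_{x,\lambda}$ and only boundedness from below enters the infinity estimate, your argument is unaffected.
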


\begin{proof}
For any fixed $x\in\partial\mathbb{R}^n_+$ and $\lambda>0$,  we define $$w_{x,\lambda}(y)=u(y)-u_{x,\lambda}(y).$$
Noting $\lim_{|y| \to 0}u(x+y)=0,$  we have that
	\begin{equation}\label{infinite-4}
	\begin{split}
			\varliminf_{|y|\rightarrow\infty}w_{x,\lambda}(y)&=\varliminf_{|y|\rightarrow\infty}u(y)-\lim_{|y|\rightarrow\infty}u (x+\frac{\lambda^2(y-x)}{|y-x|^2})-\lim_{|y|\rightarrow\infty}\ln\frac{\lambda}{|y-x|} \\
						& \ge  -C-(-\infty)\\
						&=+\infty,
			\end{split}
	\end{equation}
and
\begin{equation}\label{bd-4}
	w_{x,\lambda}(y)=\ln\frac{|y-x|}{\lambda}>0\quad\textrm{on }\partial\mathbb{R}^n_+\backslash \overline{B_\lambda(x)}.
\end{equation}
Similar to the proof of Lemma \ref{lem1-1}, we have $u_{x,\lambda}(y)\leq u(y)$.
\end{proof}

   We thus can conclude our proof from the following lemma. See, for example, the proof of Lemma 3.3 in Li and Zhu \cite{LZ95}.
   
   \begin{lemma}\label{m}  
   	Suppose that $f\in C^1(\mathbb{R}^n_+)$ satisfies, for all $x\in\partial\mathbb{R}^n_+$ and $\lambda>0$, 
   	$$f(y)\geq f(x+\frac{\lambda^2(y-x)}{|y-x|^2})+\ln\frac{\lambda}{|y-x|},\quad \forall y\in\mathbb{R}^n_+.$$
   	Then $$f(y)=f(y',y_n)=f(0',y_n),\quad \forall y=(y',y_n)\in\mathbb{R}^n_+.$$
   	\end{lemma}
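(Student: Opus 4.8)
The plan is to first upgrade the hypothesized inequality to an identity, exploiting that the conformal inversion $y \mapsto y^{x,\lambda} := x + \lambda^2(y-x)/|y-x|^2$ is an involution on $\mathbb{R}^n_+$ for $x \in \partial\mathbb{R}^n_+$. Applying the assumed inequality with $y$ replaced by $y^{x,\lambda}$, and using $(y^{x,\lambda})^{x,\lambda} = y$ together with $|y^{x,\lambda}-x| = \lambda^2/|y-x|$, the logarithmic term flips sign and one obtains the reverse inequality $f(y^{x,\lambda}) + \ln(\lambda/|y-x|) \ge f(y)$. Combined with the original inequality this forces the equality
$$f(y) = f(y^{x,\lambda}) + \ln\frac{\lambda}{|y-x|}, \qquad \forall\, x \in \partial\mathbb{R}^n_+,\ \lambda > 0,\ y \in \mathbb{R}^n_+.$$

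Next I would differentiate this identity in $\lambda$ with $x$ and $y$ held fixed; here the hypothesis $f \in C^1(\mathbb{R}^n_+)$ is exactly what is needed, and every point $y^{x,\lambda}$ lies in the open half space. Writing $z = y^{x,\lambda}$ and using $\partial_\lambda y^{x,\lambda} = (2/\lambda)(z-x)$, the right-hand side contributes only $-\lambda^{-1}$ through the $-\ln\lambda$ term, so the chain rule yields the pointwise relation $\nabla f(z)\cdot(z-x) = -\tfrac12$. Since the inversion is a bijection, for each fixed $z \in \mathbb{R}^n_+$ and each $x \in \partial\mathbb{R}^n_+$ there is a choice of $y,\lambda$ realizing $z = y^{x,\lambda}$, so this relation in fact holds for every interior point $z$ and every boundary center $x$.

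Finally I would vary the center: subtracting the relation for two boundary points $x^1, x^2 \in \partial\mathbb{R}^n_+$ gives $\nabla f(z)\cdot(x^2 - x^1) = 0$. As $x^1, x^2$ range over $\partial\mathbb{R}^n_+ = \{y_n = 0\}$, their differences span all horizontal directions, whence $\partial_i f \equiv 0$ for $i = 1,\dots,n-1$ on the connected domain $\mathbb{R}^n_+$. This is precisely the assertion $f(y', y_n) = f(0', y_n)$.

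The computations are routine; the one genuinely load-bearing step is the involution argument that promotes the inequality to an equality, since without it the subsequent differentiation only yields a one-sided estimate on $\nabla f(z) \cdot (z-x)$ rather than the exact value needed to cancel the horizontal component. I would therefore take care to verify the identities $(y^{x,\lambda})^{x,\lambda}=y$ and $|y^{x,\lambda}-x|=\lambda^2/|y-x|$ before invoking them, as these are what make the additive log-factor self-dual under the inversion.
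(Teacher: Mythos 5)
Your computations are all correct, and your argument does prove the statement \emph{exactly as printed} --- but the printed hypothesis is a misprint, and your proof exploits precisely the misprinted part. Compare with Lemma \ref{lem5}: there the scaling inequality is assumed only for $|y-x|\geq\lambda$, and the same restriction is intended (and needed) here; it is present in Lemma 3.3 of Li--Zhu \cite{LZ95}, which is the paper's entire ``proof'' of this lemma, and it is forced by the application, since Lemma \ref{lem1-2} establishes $u_{x,\lambda}\leq u$ only on $\mathbb{R}^n_+\setminus B_\lambda(x)$. Your own argument exposes the degeneracy of the unrestricted hypothesis: from $\nabla f(z)\cdot(z-x)=-\tfrac12$ for every $z\in\mathbb{R}^n_+$ and every $x\in\partial\mathbb{R}^n_+$, taking $x=(z',0)$ gives $z_n\partial_n f(z)=-\tfrac12$, so the only functions satisfying the hypothesis as printed are $f=-\tfrac12\ln z_n+C$. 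A lemma whose hypothesis admits only this one-parameter family cannot be the one the paper applies to solutions with $u=0$ on $\partial\mathbb{R}^n_+$ (which should come out as $C_*y_n^{n-1}$).

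Under the correct hypothesis ($|y-x|\geq\lambda$ only), your load-bearing first step is unavailable: replacing $y$ by $y^{x,\lambda}$ invokes the hypothesis at a point with $|y^{x,\lambda}-x|=\lambda^2/|y-x|\leq\lambda$, i.e.\ inside the ball, exactly where nothing is assumed. And the equality-upgrade is then genuinely false, not merely unproved: $f(y)=y_n$ satisfies the restricted hypothesis (strictly, when $|y-x|>\lambda$) and the conclusion, but not your identity. The proof that works is first-order rather than global. For fixed $y\in\mathbb{R}^n_+$ and $x\in\partial\mathbb{R}^n_+$, the function
\begin{equation*}
h(\lambda)=f\Big(x+\frac{\lambda^2(y-x)}{|y-x|^2}\Big)+\ln\frac{\lambda}{|y-x|}
\end{equation*}
satisfies $h(\lambda)\leq f(y)=h(|y-x|)$ on $(0,|y-x|]$, so its left derivative at the endpoint is nonnegative, which yields the one-sided bound $\nabla f(y)\cdot(y-x)\geq-\tfrac12$ --- an inequality, not your equality. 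One then exploits the unboundedness of the family of centers: writing $x=(y'-te',0)$ with $e'$ a horizontal unit vector, this reads $t\,\nabla' f(y)\cdot e'+y_n\partial_n f(y)\geq-\tfrac12$; dividing by $t$ and letting $t\to\infty$ gives $\nabla' f(y)\cdot e'\geq0$ for every horizontal $e'$, hence $\nabla' f\equiv0$. Your final step (spanning horizontal directions using differences of boundary centers) is the right instinct, but it must be run on this one-sided derivative inequality obtained at $\lambda=|y-x|$, not on an equality that the intended hypothesis cannot deliver.
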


\section{Neumann boundary condition}

To prove Theorem \ref{main-2}, we also consider $u>1$ by replacing $u$ with $u+1$. First, we need to verify the invariance of the boundary condition under the M\"obius  transformation.

\begin{proposition}\label{lem4-1} Assume $a>-1$, and $u(x) \in C^2(\mathbb{R}^n_{+})\cap C^1(\overline {\mathbb{R}^n_{+}})$. If $\lim_{y_n\rightarrow 0^+}y_n^a\partial_n u(y)=0$, then for all $x \in \partial \mathbb{R}^n_+$ and $\lambda>0$, $\lim_{y_n\rightarrow 0^+}y_n^a\partial_n u_{x,\lambda}(y)=0$ if $\lim_{y_n \rightarrow 0^+} y \ne x$.
\end{proposition}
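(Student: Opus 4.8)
The plan is to read off the explicit formula \eqref{n} for $\partial_n u_{x,\lambda}(y)$, multiply it by $y_n^a$, and show that each of the three resulting terms tends to $0$ as $y_n \to 0^+$. Throughout I write $z = y^{x,\lambda}$ and record the one geometric fact that does all the work: since $x \in \partial \mathbb{R}^n_+$ has $x_n = 0$, the normal coordinate transforms by $z_n = \lambda^2 y_n / |y-x|^2$. Let $\bar y := \lim_{y_n \to 0^+} y \in \partial \mathbb{R}^n_+$. The standing hypothesis $\bar y \ne x$ keeps $|y-x|$ bounded away from $0$, so that $z_n \to 0^+$ and $z \to \bar z := \bar y^{\,x,\lambda}$, a finite boundary point. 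In particular $u(z)$ and $\nabla u(z)$ stay bounded near the limit by the assumption $u \in C^1(\overline{\mathbb{R}^n_+})$, and the powers $|y-x|^{-k}$ all converge to finite positive limits.

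From \eqref{n}, the first and third terms of $y_n^a\,\partial_n u_{x,\lambda}(y)$ each carry an overall factor $y_n^{a+1}$ (the first being $-\tfrac{(n-2+a)\lambda^{n-2+a} y_n^{a+1}}{|y-x|^{n+a}}u(z)$ and the third $-\tfrac{2\lambda^{n+a} y_n^{a+1}}{|y-x|^{n+2+a}}\nabla u(z)\cdot(y-x)$). Because $a > -1$ gives $a+1 > 0$, we have $y_n^{a+1} \to 0$, while every other factor is bounded near $\bar y$ by the remarks above; hence both terms vanish in the limit. This disposes of the ``easy'' part without using the Neumann condition at all.

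The second term, $\tfrac{\lambda^{n+a}}{|y-x|^{n+a}} y_n^a\,\partial_n u(z)$, is the one I expect to be the main obstacle, since the weight $y_n^a$ blows up when $a < 0$ and must be paired correctly against the hypothesis. The key manipulation is to convert the weight via the transformation of the normal coordinate: from $z_n = \lambda^2 y_n/|y-x|^2$ one gets $y_n^a = \lambda^{-2a}|y-x|^{2a}\,z_n^a$, so that
\[
\frac{\lambda^{n+a}}{|y-x|^{n+a}}\, y_n^a\,\partial_n u(z) = \frac{\lambda^{n-a}}{|y-x|^{n-a}}\, z_n^a\,\partial_n u(z).
\]
The prefactor converges to the finite value $\lambda^{n-a}/|\bar y - x|^{n-a}$, while $z_n^a\,\partial_n u(z) \to 0$ as $z \to \bar z$ by the hypothesis \eqref{bdry-1}, $\lim_{z_n \to 0^+} z_n^a \partial_n u(z) = 0$. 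Thus the second term vanishes as well, and combining the three estimates yields $\lim_{y_n \to 0^+} y_n^a\,\partial_n u_{x,\lambda}(y) = 0$, proving the proposition. The only genuinely essential ingredients are the bookkeeping identity for the weight $y_n^a$ and the requirement $\bar y \ne x$, which guarantees that $|y-x|$ stays away from $0$ and that $z$ remains in a bounded region where the $C^1$ bounds on $u$ apply.
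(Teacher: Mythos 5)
Your proposal is correct and follows essentially the same route as the paper's own proof: expanding via \eqref{n}, killing the first and third terms by the factor $y_n^{1+a}$ with $a>-1$ and the boundedness coming from $\bar y\ne x$, and rewriting the middle term's weight through $z_n=\lambda^2 y_n/|y-x|^2$ to obtain $\frac{\lambda^{n-a}}{|y-x|^{n-a}}\,(y_n^a\partial_n u)(y^{x,\lambda})\to 0$ from the hypothesis. No substantive difference from the paper's argument.
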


\begin{proof}
By \eqref{n}, we have 
\begin{equation*}
\begin{split}
&\lim_{y_n\rightarrow 0^+}y_n^a\partial_n u_{x,\lambda}(y)\\
=&-\lim_{y_n\rightarrow 0^+}y_n^{1+a}\Big(\frac{(n-2+a)\lambda^{n-2+a}}{|y-x|^{n+a}}u(y^{x,\lambda})+\frac{2\lambda^{n+a}}{|y-x|^{n+2+a}}\nabla u(y^{x,\lambda})\cdot(y-x)\Big)\\
&+\lim_{y_n\rightarrow 0^+}\frac{\lambda^{n+a}y_n^{a}}{|y-x|^{n+a}}\partial_n u(y^{x,\lambda}).
\end{split}
\end{equation*}
For $a>-1$ and $\lim_{y_n \rightarrow 0^+} y \ne x$, we have
$$\lim_{y_n\rightarrow 0^+}y_n^a\partial_n u_{x,\lambda}(y)=0+\lim_{y_n\rightarrow 0^+}\frac{\lambda^{n-a}}{|y-x|^{n-a}}(y_n^a\partial_{n}u)(y^{x,\lambda})=0.$$
\end{proof}

We will use the method of moving sphere again to prove Theorem \ref{main-2}.
\begin{lemma}\label{1.2-1}
 Assume that  $n\ge 2$ and $\max\{2-n,-1\}<a<1$. Suppose that $u(y) \in C^2(\mathbb{R}_+^n) \cap C^1(\overline{\mathbb{R}_+^n}) $ satisfies 
 $$
	\begin{cases}
	div(y_{n}^{a} \nabla u)=0, &\quad u>1, \quad \text{in}\quad \mathbb{R}^n_+,\\
	y_n^a\frac {\partial u}{\partial y_n}=0&\quad\text{on}\quad \partial\mathbb{R}^n_+.
	\end{cases}
	$$
	Then for any  $x\in\partial \mathbb{R}^n_+$, and $\lambda>0$, we have that  
 $$u_{x,\lambda}(y)\leq u(y), \ \ \ \ \ \forall  y\in\mathbb{R}^n_+\backslash B_\lambda(x).$$
\end{lemma}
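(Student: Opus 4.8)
The plan is to mimic the moving--sphere comparison arguments already carried out in Lemmas \ref{lem1}, \ref{lem1-1}, and \ref{lem1-2}, with the essential new ingredient being that the boundary condition is now Neumann rather than Dirichlet, so the boundary portion of $\partial\Omega$ must be handled through the flux condition rather than by a sign at the boundary. First I would set $w_{x,\lambda}(y)=u(y)-u_{x,\lambda}(y)$ for a fixed $x\in\partial\mathbb{R}^n_+$ and $\lambda>0$. Since $a>2-n$ means $n-2+a>0$ and $u>1$, the behaviour at infinity is governed exactly as in \eqref{infinite-1}: because $u_{x,\lambda}(y)=\lambda^{n-2+a}|y-x|^{-(n-2+a)}u(y^{x,\lambda})$ decays like $|y-x|^{-(n-2+a)}$ while $u>1$, I get $\varliminf_{|y|\to\infty}w_{x,\lambda}(y)\geq 1>0$, so there is a large $N=N(x,\lambda)$ with $w_{x,\lambda}\geq C>0$ outside $\overline{B_N(x)}$.

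Next I would examine the region $\Omega=B_N^+(x)\setminus\overline{B_\lambda^+(x)}$ and check that $w_{x,\lambda}$ satisfies the degenerate equation $div(y_n^a\nabla w_{x,\lambda})=0$ in $\Omega$ (immediate from Lemma \ref{invariant}) together with favourable boundary data on $\partial\Omega$. The sphere $\partial B_\lambda(x)$ is the fixed point set of the inversion, so $u_{x,\lambda}=u$ there and hence $w_{x,\lambda}=0$; on the outer piece $w_{x,\lambda}\geq C>0$; these are exactly as in the Dirichlet cases. The genuinely new boundary piece is the flat part $\partial\mathbb{R}^n_+\cap\Omega$. Here, instead of a pointwise sign, I have the \emph{conormal/flux} information: the hypothesis $\lim_{y_n\to0^+}y_n^a\partial_n u=0$, combined with Proposition \ref{lem4-1}, gives $\lim_{y_n\to0^+}y_n^a\partial_n u_{x,\lambda}(y)=0$ at every boundary point other than $x$, so $\lim_{y_n\to0^+}y_n^a\partial_n w_{x,\lambda}=0$ on the flat boundary of $\Omega$.

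The remaining and decisive step is a maximum principle for the degenerate operator $div(y_n^a\nabla\,\cdot\,)$ on $\Omega$ under \emph{mixed} boundary conditions: $w_{x,\lambda}\geq0$ on the spherical parts and vanishing weighted conormal derivative on the flat part. I would argue by contradiction: if $\min_\Omega w_{x,\lambda}<0$, the minimum is attained at an interior point or on the flat boundary (not on the spherical parts, where $w_{x,\lambda}\geq0$). An interior negative minimum is excluded by the classical strong maximum principle applied to the uniformly elliptic operator $y_n^a\Delta+ay_n^{a-1}\partial_n$ on compact subsets of $\Omega$ where $y_n>0$. A negative minimum on the flat boundary is excluded by a Hopf-type lemma for this weighted operator: at such a point the weighted conormal flux $\lim_{y_n\to0^+}y_n^a\partial_n w_{x,\lambda}$ would have to be strictly positive, contradicting the vanishing flux just established. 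This is where the restriction $a>\max\{2-n,-1\}$ is used: $a>-1$ is precisely what makes the weight $y_n^a$ locally integrable near the boundary so the flux condition is meaningful and the Hopf argument applies (cf.\ Proposition \ref{lem4-1}), while $a>2-n$ guarantees the correct decay at infinity in \eqref{infinite-1}.

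I expect the main obstacle to be making the degenerate Hopf lemma rigorous, since $y_n^a$ degenerates ($0<a<1$) or blows up ($-1<a<0$) at the boundary and $\partial_n w_{x,\lambda}$ itself need not be finite there; the natural quantity is the weighted flux $y_n^a\partial_n w_{x,\lambda}$, and one must justify that a negative boundary minimum forces this flux to be positive. The cleanest route is to pass, near a boundary point, to the even-reflection variable $t=y_n^{1-a}/(1-a)$, under which the operator becomes a non-degenerate (divergence-form, possibly with an $A_2$ weight in the $t$-direction) elliptic operator to which standard strong maximum and Hopf lemmas apply; this reduction both legitimises the maximum principle on $\Omega$ and converts the vanishing weighted flux into a vanishing ordinary normal derivative in the $t$-coordinate, closing the argument and yielding $w_{x,\lambda}\geq0$ throughout $\mathbb{R}^n_+\setminus B_\lambda(x)$.
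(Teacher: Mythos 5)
Your setup coincides with the paper's own proof: the definition of $w_{x,\lambda}$, the lower bound $\varliminf_{|y|\to\infty}w_{x,\lambda}\ge 1$ coming from $n-2+a>0$ and $u>1$, the domain $\Omega=B_N^+(x)\setminus\overline{B_\lambda^+(x)}$, the use of Proposition \ref{lem4-1} to get vanishing weighted flux of $w_{x,\lambda}$ on the flat boundary, and the reduction to excluding a negative minimum of $w_{x,\lambda}$ on $\partial\Omega\cap\partial\mathbb{R}^n_+$. But at exactly that point --- which you yourself identify as the decisive step --- there is a genuine gap. You propose to prove the needed Hopf-type statement via the change of variables $t=y_n^{1-a}/(1-a)$, claiming the operator becomes non-degenerate. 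It does not. Writing $v(y',t)=u(y',y_n)$, one has $y_n^a\partial_{y_n}u=\partial_t v$, and $div(y_n^a\nabla u)=0$ becomes
$$\big((1-a)t\big)^{\frac{2a}{1-a}}\Delta' v+\partial_{tt}v=0,$$
a Grushin-type equation whose tangential coefficient vanishes (for $0<a<1$) or blows up (for $-1<a<0$) as $t\to0^+$. It is not uniformly elliptic near $\{t=0\}$, nor is it of the form $div(\omega\nabla\cdot)$ for a scalar weight in these coordinates (the cross term $\omega'\partial_t v$ is missing), so ``standard strong maximum and Hopf lemmas'' cannot be invoked; the difficulty you set out to resolve is still fully present. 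The genuinely standard alternative would be even reflection in the original variable $y_n$, producing a weak solution of $div(|y_n|^a\nabla\cdot)=0$ with $|y_n|^a$ an $A_2$ weight, and then the Fabes--Kenig--Serapioni theory --- but that is a different and much heavier argument than the one you describe, and your sketch conflates the two.

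The paper closes this step with an elementary barrier that never appears in your proposal: $g(y)=y_n^{1-a}$ is an exact solution of the same equation, vanishes on $\partial\mathbb{R}^n_+$, and has constant weighted flux $y_n^a\partial_n g=1-a>0$ (this is precisely where $a<1$ enters). Assuming a negative minimum $m<0$, one sets $A_\epsilon=w_{x,\lambda}-\epsilon g$, which still solves the equation; for $0<\epsilon<-m/N^{1-a}$ the maximum principle forces $\min_{\overline\Omega}A_\epsilon$ to be attained on $\partial\Omega\cap\partial\mathbb{R}^n_+$, since on $\partial\Omega\cap\mathbb{R}^n_+$ one has $A_\epsilon\ge-\epsilon N^{1-a}>m$. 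At such a minimum point $y^*$ the weighted flux satisfies $\lim_{y\to y^*}y_n^a\partial_n A_\epsilon\ge0$, hence $\lim_{y\to y^*}y_n^a\partial_n w_{x,\lambda}\ge\epsilon(1-a)>0$, contradicting the Neumann condition in \eqref{w-equ-1}. Note that in your $t$-coordinate this barrier is simply the linear function $(1-a)t$: had you subtracted a small multiple of it from $w_{x,\lambda}$, your argument would close without any appeal to a uniform ellipticity that is not there.
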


\begin{proof}
    For any fixed $x\in\partial\mathbb{R}^n_+$ and $\lambda>0$, define $$w_{x,\lambda}(y)=u(y)-u_{x,\lambda}(y).$$
    Then 
    \begin{equation*}
    \varliminf_{|y|\rightarrow\infty}w_{x,\lambda}(y)=\varliminf_{|y|\rightarrow\infty}u(y)-\lim_{|y|\rightarrow\infty}\frac{\lambda^{n-2+a}}{|y-x|^{n-2+a}}u(x+\frac{\lambda^2(y-x)}{|y-x|^2})\geq 1,
    \end{equation*}
    Thus there is an $N=N(x,\lambda)>0$ large enough, such that $w_{x,\lambda}\geq C>0$ in $\mathbb{R}^n_+\backslash \overline{B_N(x)}$. Define $\Omega=B^+_N(x)\backslash \overline{B_\lambda^+(x)}$, then we have 
    \begin{equation}\label{w-equ-1}
    \begin{cases}
    div(y_{n}^{a} \nabla w_{x,\lambda})=0, & \quad \text{in}\quad \Omega   \\
    w_{x,\lambda} \ge 0 &\quad\text{on}\quad \partial  \Omega\cap\mathbb{R}^n_+,\\
    y_n^a\frac{\partial w_{x,\lambda}}{\partial y_n}=0&\quad \text{on}\quad \partial \Omega\cap\partial\mathbb{R}^n_+.
    \end{cases}
    \end{equation}
    
    We claim that $w_{x,\lambda}\geq 0$ in $\Omega$. Otherwise $m=\min_{\overline{\Omega}}w_{x,\lambda}<0$. By the maximum principle and boudary condition of $w_{x,\lambda}$, we know that $m=\min_{\partial \Omega\cap\partial\mathbb{R}^n_+}w_{x,\lambda}$.
     For $\epsilon>0$, we consider $$A_\epsilon(y):= w_{x,\lambda}(y)-\epsilon g(y),$$ where $g(y)=y_n^{1-a}$.  Easy to see that  $ div(y_{n}^{a} \nabla A_\epsilon)=0$ in $ \Omega$. Thus  there is a positive $\epsilon_0<\frac{-m}{N^{1-a}}$, such that,  for $\epsilon \in (0, \epsilon_0)$, \begin{equation}\label{min}
     \min_{\overline{\Omega} }A_\epsilon(y)=\min_{\partial \Omega \cap \partial \mathbb{R}_+^n} A_\epsilon(y).
     \end{equation}
     In fact, for $\epsilon\in(0,\epsilon_0)$, we have
     $$A_\epsilon|_{\partial\Omega\cap\mathbb{R}^n_+}\geq -\epsilon g|_{\partial\Omega\cap\mathbb{R}^n_+}\geq -\epsilon N^{1-a}>m=\min_{\partial\Omega\cap\partial\mathbb{R}^n_+}w_{x,\lambda}=\min_{\partial\Omega\cap\partial\mathbb{R}^n_+}A_\epsilon.$$
     We thus obtain \eqref{min} by the maximum principle.
     
     Let  $y^*\in \partial \Omega\cap\partial\mathbb{R}^n_+$ be one minimal point with $A_\epsilon(y^*)=\min_{\overline{\Omega} }A_\epsilon(y)<0$.  It follows that
    	\begin{equation}\label{neum-1}
    	\lim_{y \to y^*}\big(y_n^a \frac{\partial A_\epsilon }{\partial y_n}\big)(y)\ge 0.
    	\end{equation}		
      Thus 
    	$$\lim_{y \to y^*}\big(y_n^a\frac{\partial w_{{x, \lambda}}}{\partial y_n}\big) (y)\geq \lim_{y_n\rightarrow 0^+}\epsilon\big(y_n^a\frac{\partial  g}{\partial y_n}\big) (y)=\epsilon(1-a)>0.$$
    	It is in contradiction to the boundary condition in the equation \eqref{w-equ-1}.
\end{proof}

Now, we use  Lemma \ref{lem5} to conclude that $u$ only depends on $y_n$. By solving the corresponding ODE  we get Theorem \ref{main-2}.

\bigskip

 \noindent {\bf Acknowledgements}\\
 \noindent   L. Wang is supported by the China Scholarship Council for her study/research at the University of Oklahoma.  L. Wang   would like to thank Department of Mathematics at  the  University of Oklahoma  for its hospitality,  where this work has been done.
\small

\end{document}